%
%
%
%
%
%
%
\documentclass{svjour3}                     
%
%
\usepackage{graphicx}

%
%
\usepackage{amsmath}
\usepackage{amssymb}
\usepackage{color}
\usepackage{bbold}
%
\newcommand{\D}{\mathrm{d}}
\newcommand{\R}{\mathbb{R}}

\newcommand{\dst}{\displaystyle}

\newtheorem{assumption}{Assumptions}
\bibliographystyle{plain}
%
%

\begin{document}

\title{Mass concentration in a nonlocal model of clonal selection
}
\subtitle{}


\author{J.-E.~Busse \and
		P.~Gwiazda \and
		 A.~Marciniak-Czochra }


\institute{P.Gwiazda \at Institute of Applied Mathematics and Mechanics,  University of Warsaw,  ul. Banacha 2, Warsaw 02-097, Poland \and 
A. Marciniak-Czochra \at Institute of Applied Mathematics, Interdisciplinary Center for Scientific Computing (IWR) and  BIOQUANT, University of Heidelberg, Im Neuenheimer Feld 294, 69120 Heidelberg, Germany \and 
J.E. Busse \at Institute of Applied Mathematics, BIOQUANT, University of Heidelberg, Im Neuenheimer Feld 294, 69120
}

\date{Received:/ Accepted: date}

\maketitle

\begin{abstract}
Self-renewal is a constitutive property of stem cells. Testing the
cancer stem  cell hypothesis requires investigation of the impact of self-renewal on
cancer  expansion.  To better understand this impact, we 
propose a mathematical model describing the dynamics of a continuum of cell clones structured by the self-renewal 
potential. The model is an extension of the finite multi-compartment models of interactions between normal and cancer cells in acute leukemias. It takes a form of a system of integro-differential equations with a nonlinear and nonlocal coupling which describes regulatory feedback loops of cell proliferation and differentiation.  We show that this coupling leads to mass concentration in points corresponding to the maxima of the self-renewal potential and the solutions of the model tend asymptotically to Dirac measures multiplied by positive constants.
Furthermore,  using a Lyapunov function constructed for the finite dimensional counterpart of the model,  we prove that the total mass of the solution converges to a globally stable equilibrium. Additionally, we show stability of the model  in the space of positive Radon measures equipped with the flat metric (bounded Lipschitz distance). Analytical results are illustrated by numerical simulations.
\keywords{integro-differential equations\and  mass concentration \and Lyapunov function \and selection process\and clonal evolution\and cell differentiation model\and bounded Lipschitz distance}
\end{abstract}

\section{Introduction}
\label{intro}

\noindent  This paper is devoted to the analysis of a structured population model describing clonal evolution of acute leukemias. Leukemia is a disease of the blood production system leading to an extensive expansion of malignant cells that are non-functional and cause an impairment of blood regeneration. Recent experimental evidence indicates that cancer cell populations are composed of multiple clones consisting of genetically identical cells \cite{Ding} and maintained by cells with stem-like properties \cite{Bonnet,Hope}. Many authors have provided evidence for heterogeneity of leukemic stem cells (LSC) attempting to identify their characteristics; for review see Ref. \cite{Lutz}. Heterogeneity is further supported by the results of gene sequencing studies \cite{Ding,Ley}.  However, it was shown in these studies that a limited number of clones contribute to the total leukemic cell mass. At most 4 contributing clones were detected in the case of acute myeloid leukemia (AML) and at most 10 in the case of acute lymphoblastic leukemia (ALL)  \cite{Ding,Lutz}.  Moreover, in most cases of ALL,  the clones dominating the relapse have already been present at the diagnosis but undetectable by the routine methods  \cite{VanDelft,Choi,LutzLeuk}.  Due to a quiescence, a very slow cycling or other intrinsic mechanisms \cite{LutzLeuk,Choi}, these clones may survive chemotherapy and eventually expand  \cite{LutzLeuk,Choi}. This implies that the main mechanism of relapse in ALL might be selection of existing clones and not acquisition of therapy-specific mutations \cite{Choi}. Similar mechanisms have been described in AML \cite{Ding,Jan}.  Based on these findings the evolution of malignant cells can be interpreted as a selection process  for properties that enable cells to survive the treatment {and to expand efficiently.} The mechanisms of the underlying process and its impacts on the disease dynamics and on the response of cancer cells to chemotherapy are not understood.  Gene sequencing studies allow deciphering the genetic relations among different clones; nevertheless the impact of many detected mutations on cell behaviour remains unclear \cite{Ding}. The multifactorial nature of the underlying processes severely limits the intuitive interpretation of the experimental data.  \\

\noindent To investigate the impact of cell properties on the multi-clonal composition of leukemias and to elucidate the possible mechanisms of the clonal selection suggested by the experimental data, a multi-compartmental model was proposed and studied numerically in Ref. \cite{StiehlBaranHoMarciniak}.   It assumes the form of the following system of ordinary differential equations,
\begin{eqnarray}\label{sys:multi}
\frac{d}{dt}c_1(t)&=&\big(2a^cs(t)-1\big)p^cc_1(t),\nonumber\\
\frac{d}{dt}c_2(t)&=&2\big(1-a^c s(t)\big)p^cc_{1}(t)-d^c_2c_2(t),\nonumber\\
\frac{d}{dt}l^1_1(t)&=&\big(2a^{l^1} s(t)-1\big)p^{l^1}l^1_1(t),\nonumber\\
\frac{d}{dt}l^1_2(t)&=&2\big(1-a^{l^1} s(t)\big)p^{l^1}l^1_{1}(t)-d^{l^1}_2l^1_2(t),\nonumber\\
\vdots&\vdots&\vdots
\end{eqnarray}
\begin{eqnarray}
\frac{d}{dt}l^n_1(t)&=&\big(2a^{l^n} s(t)-1\big)p^{l^n}l^n_1(t),\nonumber\\
\frac{d}{dt}l^n_2(t)&=&2\big(1-a^{l^n} s(t)\big)p^{l^n}l^n_{1}(t)-d^{l^n}_2l^n_2(t),\nonumber\\\nonumber\\
s(t)&=&\frac{1}{1+K^cc_2(t)+K^l\sum_{i=1}^n l^i_2(t)},\nonumber
\end{eqnarray}
with nonnegative initial data.\\

\noindent The model  describes time dynamics of a healthy cell line, denoted by $c_j$, $j=1,2$ and of $n$ clones of leukemic cells $l^i_j$, for $j=1,2,$ and $i=1,...,n$, at time $t$. Each population consists 
of two different cell types, proliferating and non-proliferating, denoted by $j=1$ and $j=2$, respectively.  This two-compartment model is a simplification of the more realistic model with multiple differentiation stages; see Ref. \cite{MCSHJW,StiehlHoMarciniak} for an introduction to the model and its application to the healthy hematopoiesis;  Ref. \cite{Getto,Nakata,Stiehl} for its analysis;
and Ref. \cite{Doumic} for a continuous-structure extension. This model can be viewed as a structured population model with a discrete structure describing two differentiation stages and $n+1$ cell types.\\

\noindent Parameters $p^c>0$ and $p^{l^i}>0$ denote the proliferation rate of the healthy cells and the cells in the leukemic clone $i$, respectively, and $a^c$ and $a^{l^i}$ are the corresponding maximal  fractions of self-renewal, which depend on the proportion of
symmetric and asymmetric cell divisions in the respective population. More precisely, the self-renewal fractions $0<a^c<1$ and $0<a^{l^i}<1$ are the fractions of the progeny cells that remain in the compartment of proliferating cells. Consequently, $(1- a^{c})$ and $(1- a^{l^i})$ are fractions of 
the dividing cells that differentiate and become non-proliferating. By  $d_2^c>0$ and $d_2^{l^i}>0$ we denote the clearance rate of the non-proliferating healthy cells and the cells in the $i$-th leukemic clone, respectively. \\

\noindent The model is based on the assumption that leukemic clones and their normal counterparts respond to a hematopoietic feedback signalling and compete for  signalling factors (cytokines). We assume that the feedback signal, $s(t)$, decreases if the number of non-proliferating cells increases. Derivation of such nonlinear feedback loop was proposed in Ref. \cite{MCSHJW}. It  is based on a Tikhonov-type quasi-stationary approximation of dynamics of the extracellular signalling molecules, such as the G-CSF cytokine, which are secreted by specialised cells at a constant rate and degraded by a receptor-mediated endocytosis. {  Following the evidence from clinical trials that the mature granulocytes mediate clearance of G-CSF  \cite{Layton}, we assume that dynamics of the signalling molecules depends on the number of non-proliferating cells. This assumption has been also supported by studies of receptor expression showing that the mature cells express significantly more receptors than the cells in bone marrow \cite{Shinjo}. Taking into account these observations, we obtain a model with a nonlinear coupling depending on the level of non-proliferating cells.}\\

\noindent Numerical simulations of model \eqref{sys:multi} suggest that cells with a superior self-renewal potential, i.e. a maximum value of the parameter $a$, reflecting the probability that a daughter cell has the same properties and fate as its parent cell,  have an advantage in comparison to their competitors, which leads to the expansion of this cell 
subpopulation  \cite{StiehlBaranHoMarciniak}. The phenomenon was shown analytically solely in the case of two competing populations, a healthy and a cancerous cell line \cite{StiehlMarciniakMMNP}.\\

\noindent To elucidate further mechanisms of clonal selection, we propose an infinitely dimensional extension of the multi-compartment model  \eqref{sys:multi}. We introduce a continuous variable $x\in \Omega$ that represents the expression level of  genes (yielding a phenotype) influencing self-renewal properties of the cells. 
It leads to a system of integro-differential equations describing dynamics of a structured population with the continuum of cell clones and the two-compartment differentiation structure. Cells in Population 1 (dividing cells) proliferate and may self-renew or differentiate into Population 2 cells (differentiated cells). Population 2 cells do not proliferate and die after an exponentially distributed lifetime, as depicted in  Fig. \ref{scheme}. Cells in both populations are stratified by a structure variable $x$. We assume that the self-renewal parameter depends on $x$, i.e. the parameter $a$ becomes a function $a(x)$. These assumptions lead to the model
 \begin{eqnarray}\label{sys:reg0}
					\frac{\partial}{\partial t} u_1(t,x) &=& \left(2a(x)s(t)-1\right)pu_1(t,x), \nonumber\\
					\frac{\partial}{\partial t} u_2(t,x) &=& 2\left(1-a (x)s(t) \right)pu_1(t,x) - d u_2(t,x), \\
					u_1(0,x) &=& u_1^0(x), \nonumber\\
					u_2(0,x)& =& u_2^0(x). \nonumber
\end{eqnarray}
Assuming $s(t) = 1/\left(1+K\int\limits_{\Omega} u_2 (t,x)\D x\right)$, we obtain a nonlocal and nonlinear coupling of the two equations.\\
\begin{figure}[htpb] 
\begin{center}
\includegraphics[width=0.96 \textwidth]{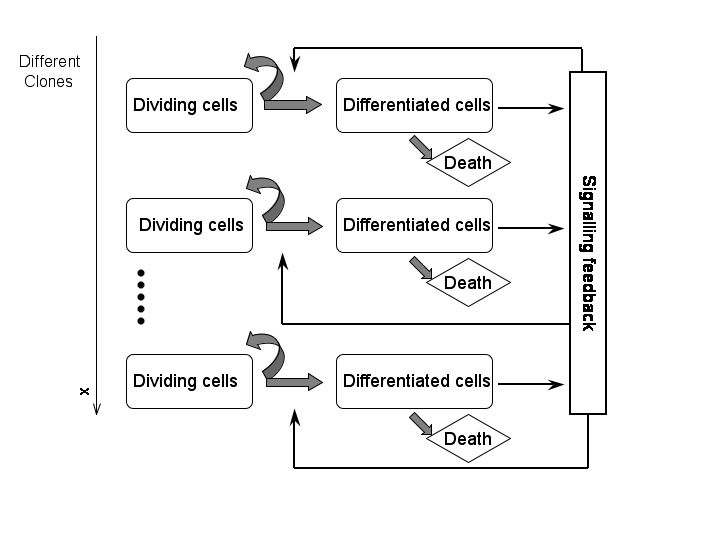}
\end{center}
\caption{Schematic representation of model \eqref{sys:reg0}, consisting of  two compartments corresponding to undifferentiated cells (dividing cells) and
mature cells (differentiated cells). Undifferentiated cells (stem cells and early progenitors) divide symmetrically or asymmetrically. Accordingly, they produce cells of the same type (self-renewal) and mature cells (differentiation). Mature cells do not divide and they die after an exponentially distributed lifetime. The cells in each compartment are heterogenous. They are stratified by a structure variable $x$ that represents the expression level of genes (yielding a phenotype and eg. influencing the self-renewal properties of the cells). Self-renewal and differentiation of cells are regulated by a cytokine feedback which, in
turn, depends on the total count of differentiated cells.} \label{scheme}
\end{figure}

\noindent Our approach is motivated by the theory of selection of the most fit variants in adaptive evolution.  Cells with different mutational variants might have different growth properties allowing them to expand more efficiently.  The phenomenon can be understood as an example of closely related to the process of Darwinian evolution. In our particular case, certain rare mutants may have positive growth rates and be selected in environments that otherwise result in extinction.  In other words, cells with a  fitness advantage expand and dominate dynamics of the population leading to extinction of the other cell clones. The model proposed belongs to the class of selection models exhibiting a mass concentration effect, similar to those presented in the books \cite{perthame} and {\cite{B}}.\\

\noindent In the current work, we do not model mutation events. Instead, motivated by the experimental findings described earlier in Ref. \cite{LutzLeuk,Choi}, we aim to understand which aspects of the dynamics of leukemias can be  explained by the selection alone. It is interesting, since the relapse caused by an expansion of a clone that could not be detected at diagnosis due to the limited sensitivity of detection methods, can be misinterpreted as a mutational event \cite{Choi}. {  A computational model of the AML with mutations was proposed in Ref. \cite{StiehlBaranHoMarciniak}.  Following the biological evidence \cite{Jan12}, it was assumed that new LSC clones were formed due to mutations occurring in LSCs or  due to the influx from the so-called preleukemic cells at a rate modelled by a time inhomogeneous Poisson process. At each point of the Poisson process a new clone with random cell properties was added to the system.
 Simulations of that model demonstrate that leukemic cell properties at diagnosis and at relapse are comparable to the scenario without mutations.
 Introducing mutations to the continuous models is known to make asymptotic analysis more complicated, and therefore we do not consider this aspect in the current paper.}\\

\noindent The mathematical angle of our study is analysis of the nonlocal effects and development of singularities in the solutions of the integro-differential equations.
We show that the solutions of system \eqref{sys:reg0} may tend to Dirac measures concentrated in points with the largest value of the self-renewal potential. Such dynamics can be interpreted in the terms of selection, which causes convergence of the heterogeneous initial data to a stationary solution with the mass localised on a set of measure zero. Convergence then holds in the weak$*$ topology of Radon measures. { Considering the space of positive Radon measures with  a suitable metric allows formulating the result on convergence of solutions to a stationary measure in the terms of the metric instead of the weak$*$ convergence of Radon measures}. We apply the flat metric (bounded Lipschitz distance), which has proven to be useful in the analysis of a variety of transport equations models, for example to study Lipschitz dependence of solutions of nonlinear structured population models on the model parameters and initial data \cite{GLMC,GMC,CCGU};  see Appendix for the definition and properties of the flat metric. \\

\noindent Similar results have been recently shown for scalar equations including diffusion; see for instance Ref. \cite
{barles,BMP,LMP,Lorz2,DJMR}, {and \cite{Lorz1} for a model with an additional space structure}. 
The equations studied in \cite{Lorz1}  and  \cite{Lorz2} have been also applied to address cancer heterogeneity, and the influence of the selection process on the cancer resistance to chemotherapy. \\

\noindent The novelty of our work  lies in considering a  system of two coupled equations. Difficulty of the analysis is related to the specific nonlinearities in the model, {which do not allow for component-wise estimates. The proof of boundedness of mass in the scalar equations is based on existence of sub- and supersolutions. In the case of a system, we face a difficulty which appears already in the proof of boundedness of solutions of a structure-independent model. The estimates cannot be concluded directly from the equations. To tackle this problem, we investigate the dynamics of the quotients of solutions of the two variables. Systems of equations also cause additional difficulties  when analysing the long-term dynamics in comparison to the scalar equations due to the lack of a rich class of entropies.  Convergence to a stationary positive Radon measure has been previously studied  for a scalar integro-differential equation which is linear in the nonlocal term as in Ref.  \cite{JaRa}.  This is often referred to as the Evolutionarily Stable Distribution. To deal with model nonlinearities, we make use of a Lyapunov function established previously for a finite dimensional counterpart of the model in Ref. \cite{Getto} and we show that  the total masses of solutions  tend asymptotically to the same equilibria.
 \\
}

{ \noindent{A system of two equations describing selection and mutation in a stage-structured population has been investigated in Ref. \cite{CaCu} and \cite{CC} in the context of adaptive dynamics. Analysis of that model is based on a specific structure of nonlinearities appearing only in the mortality terms. Using irreducibility of the mutation operator and the infinite dimensional version of the Perron-Frobenius Theorem, it has been shown that solutions of the model converge to a stationary distribution, which concentrates at the point of maximum fitness in the case of the frequency of mutations tending to zero. The nonlinearity in our model is related to the growth term, which requires a different approach to the analysis of the asymptotic behaviour of the model solutions. 
The difference in the structure of nonlinear feedbacks is related to a different biological definition of the described processes. While the classical juvenile-adult dynamics is based on a loop of two positive feedbacks and no self-enhacement, the model of cell differentiation involves a negative feedback and a self-enhancement of the first population. Interestingly, the two-stage structure in our model yields stabilisation of the total populations, while even in the basic juvenile-adult models, the two-stage structure may lead to multiple attractors and limit cycles; see for example Ref. \cite{ThiemeODE}.\\
}
}
{

 \noindent  The paper is organised as follows: In Section \ref{Main}, the main results are stated. Analytical results are illustrated by numerical simulations.
{ Proofs of boundedness and strict positivity of the total masses and of the exponential decay of the model solutions  outside the set corresponding to the maximal value of the self-renewal parameter are presented in Section \ref{ProofConcentration}. Section \ref{SectionMassConv} contains the proof of mass convergence to a globally stable equilibrium. Finally, the asymptotic dynamics of the model solutions is shown in Section \ref{convergence}. Additionally, in Section \ref{Measures}, we show how to extend the analysis of our model to the framework of positive Radon measures with a suitable metric. Finally, in Section \ref{discussion} we discuss biological conclusions and ideas stemming from this work. A summary of properties of the metrics used in Section \ref{convergence} is provided in the Appendix.}

\section{Main results}
\label{Main}
\noindent We consider the following system of integro-differential equations
\begin{eqnarray}\label{sys:reg}
					\frac{\partial}{\partial t} u_1(t,x) &=& \left(\frac{2a(x)}{1+K\rho_2(t)}-1\right)pu_1(t,x), \nonumber\\
					\frac{\partial}{\partial t} u_2(t,x) &=& 2\left(1-\frac{a(x)}{1+K\rho_2(t)}\right)pu_1(t,x) - d u_2(t,x), \\
					u_1(0,x) &=& u_1^0(x),\nonumber \\
					u_2(0,x)& =& u_2^0(x), \nonumber
\end{eqnarray}
where $$\rho_i(t) = \int\limits_{\Omega} u_i (t,x) \,\D x,\:\:\: \text{for} \:\:\:  i=1,2$$ and $\Omega \subset \R$ is open and bounded. \\

\noindent In the remainder of this work we make the following assumptions on the model parameters  { and initial data}.
\begin{assumption}\label{Ass}

\begin{itemize}
\item[(i)]  $a\in C(\overline \Omega)$ with { $0< a < 1$} and $\overline\Omega$ being a closure of $\Omega$.\\
\item[(ii)]$p$, $d$ and $K$ are positive constants.\\
\item[(iii)] {$u_1^0, u_2^0 \in L^1(\Omega)$ { are strictly positive a.e. with respect to the Lebesgue measure, i.e. $\int_{B }u_i^0 \D x>0$, for every set B such that ${\cal L}^1(B)>0$, $i=1,2$.}}\\
\item[(iv)] The set of maximal values of the self-renewal parameter $a$, i.e.
{ 
\begin{equation}\label{omega_a}
\Omega_a=\arg\max\limits_{x\in\overline\Omega}a(x) = \left\{\bar{x}\in\overline\Omega\left|  \bar a:= a(\bar{x}) = \max\limits_{x\in\overline\Omega} a(x)\right.\right\}
 \end{equation} }
either consists of a single point or it is a set with a positive Lebesgue measure.
\end{itemize}
\end{assumption}
\begin{remark}
The assumption  (iv) on the self-renewal fraction $a(x)$ is made to streamline the presented analysis.  If $ \Omega_a$ consists of several isolated points, then the solution is attracted by a finite dimensional subspace spanned by Dirac deltas located at the maximum points of $a$; see Fig.~\ref{Fig2}. However, in this case  the exact pattern may also depend on the shape of function $a(x)$ near its maximal points. { Since analysis of this case requires stronger assumptions on regularity of the initial data and the function $a(x)$, we consider it separately in Theorem \ref{concentration2}.}
\end{remark}
 Existence and uniqueness of  a classical solution $u = (u_1, u_2)\in C^1([0,T), L^1(\Omega)\times L^1(\Omega))$ follow by the standard theory of ordinary differential equations in Banach spaces. 
More delicate is the question of asymptotic behaviour of the solutions of system \eqref{sys:reg}. Our goal is to show that the solution $u$ 
tends asymptotically to a stationary measure, as it is observed in the numerical simulations, see Fig.~\ref{Fig1} and Fig.~\ref{Fig2}.
The phenomenon is characterised by the following Theorem.
\begin{theorem}\label{thm:concentration}
		Let Assumptions \ref{Ass} hold and let $(u_1,u_2)$ be a solution of system \eqref{sys:reg} with initial data $(u_1^0,u_2^0)$. Then,
		$u_1$ and $u_2$ converge to { stationary} measures  with supports contained in the set  
		{ $\Omega_a$ defined in expression \eqref{omega_a}}, as $t$ tends to infinity. Moreover,
		 \begin{itemize}
		\item[(i)] If $ \Omega_a$ consists of  a single point $\bar x$ { and $\bar a= \max\limits_{x\in\overline\Omega} a(x) > \frac12$}, then the solution converges to a stationary measure  (Dirac measure multiplied by a positive constant  { $(c_1,c_2)=\left(\frac{d}{p}\frac{2\bar a-1}{K},\frac{2\bar a-1}{K}\right)$}) concentrated in $\bar x$. Convergence holds in the flat metric (bounded Lipschitz distance); see Appendix for the definition and properties of the bounded Lipschitz distance. \\
		\item[(ii)] If $\Omega_a$ is a set with positive measure { and $\bar a=  \max\limits_{x\in\overline\Omega} a(x) > \frac12$}, then the solution converges to a stationary $L^1$-function, such that \newline  $\lim\limits_{t\rightarrow +\infty}u_i(t,x)= \tilde c_i u_i^0(x) \mathbb{1}_{\Omega_a}$, for $i=1,2$, where $ \mathbb{1}_{\Omega_a}$ is the characteristic function of the set $\Omega_a$, { $\tilde c_1=\frac{d}{p}\frac{(2 \bar a-1)}{Ku_1^0 |\Omega_a|}$, and $\tilde c_2=\frac{(2 \bar a-1)}{Ku_1^0 |\Omega_a|}$}. Convergence is strong in $L^1(\Omega)$.\\
		\item[(iii)] { If $\bar a = \max\limits_{x\in\overline\Omega} a(x) \leq \frac12$, then the solution converges to zero, i.e. $\lim\limits_{t\rightarrow +\infty}u_i(t,x)=0$, for $i=1,2$. Convergence is strong in $L^1(\Omega)$.}
		\end{itemize}
\end{theorem}
{\begin{remark}
 If $a(x)\leq \frac12$ for some points $x \in  \Omega$, then the solutions of the model converge point-wise to 
zero, i.e. $\lim_{t\rightarrow \infty}(u_1(t,x),u_2(t,x)) =(0,0)$ for every $x \in \Omega_-:=\{x\in \Omega\left| a(x)\leq \frac12 \right  \}$. This is a straightforward consequence of equation \eqref{sys:reg}, since $\rho_2$ is strictly positive, as shown in Lemma \ref{lem:1}, and hence $\left(\frac{2a(x)}{1+K\rho_2(t)}-1\right)<0$ for  $x \in \Omega_-$. Therefore, we are interested in evolution of the system for $x \in \Omega_+:=\Omega\setminus \Omega_-$. Subpopulations with $a(x) \leq \frac12$ may affect 
short-term dynamics of the system; however they have no influence on the asymptotic behaviour.
 \end{remark}}
Details of the proof are presented in Sections \ref{ProofConcentration}, \ref{SectionMassConv} and \ref{convergence}.  { The proof is based on the following key steps:\\

\noindent {\it Step 1.} Uniform boundedness and strict positivity of  masses  $\rho_i(t) = \int\limits_{\Omega} u_i (t,x) \,\D x$ for $i=1,2$ (Lemma \ref{lem:1})}.
\begin{lemma}\label{lem:1}
		Let Assumptions \ref{Ass} (i)-(iii) hold with { $\bar a = \max\limits_{x\in\overline\Omega} a(x) > \frac12$}
		and let $(u_1, u_2)$ be a solution of system \eqref{sys:reg}. Then, $\rho_1$ and $\rho_2$ are uniformly bounded and strictly positive, i.e. there exists a positive lower bound, uniform in time. 
\end{lemma}
Proof of this lemma is deferred to Section \ref{boundedness}.\\ 

 \noindent   {{\it Step 2.} Exponential extinction of solutions in points outside the set $\Omega_a$ (Lemma \ref{lem:decay2}).}\\
 
\noindent We start with characterising the asymptotic behaviour of the ratios of solutions taken at different $x$ points. 
\begin{lemma}\label{asympt}
Let $x_1, x_2\in\overline \Omega$ such that $a(x_1) -a(x_2) < 0$. Then, there exists a constant $M_3>0$ such that
\[\frac{u_1(t,x_1)}{u_1(t,x_2)} \leq \frac{u_{1}^0(x_1)}{u_{1}^0(x_2)}e^{p\frac{2\left(a(x_1) - a(x_2)\right)}{1+KM_3}t} \:\: \stackrel{t\rightarrow\infty}{\longrightarrow} \:\:0, \]
a.e. with respect to the Lebesgue measure.
\end{lemma}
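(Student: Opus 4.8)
The plan is to exploit the fact that the first equation of \eqref{sys:reg} is, for each fixed $x$, a scalar linear ODE in $t$ whose coefficient splits into an $x$-dependent part $2pa(x)/(1+K\rho_2(t))$ and a common part $-p$. First I would integrate this equation explicitly: for a.e. $x$,
\[
u_1(t,x) = u_1^0(x)\exp\left(\int_0^t\left(\frac{2a(x)}{1+K\rho_2(s)}-1\right)p\,\D s\right).
\]
The crucial point is that the nonlinear and nonlocal term $1/(1+K\rho_2(s))$ does not depend on $x$, so forming the quotient $u_1(t,x_1)/u_1(t,x_2)$ cancels both the common factor $-p$ and the shared denominator structure, leaving
\[
\frac{u_1(t,x_1)}{u_1(t,x_2)} = \frac{u_1^0(x_1)}{u_1^0(x_2)}\exp\left(2p\bigl(a(x_1)-a(x_2)\bigr)\int_0^t\frac{\D s}{1+K\rho_2(s)}\right).
\]
This quotient trick is what tames the nonlinearity: the hard-to-control quantity $\rho_2(s)$ enters only through the positive integral factor $\int_0^t \D s/(1+K\rho_2(s))$.

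Next I would estimate that integral. By Lemma \ref{lem:1}, $\rho_2$ is uniformly bounded; let $M_3>0$ be such that $\rho_2(s)\le M_3$ for all $s\ge 0$, so that $1/(1+K\rho_2(s)) \ge 1/(1+KM_3)$. Since $a(x_1)-a(x_2)<0$ by hypothesis, the prefactor $2p(a(x_1)-a(x_2))$ is negative, and multiplying the lower bound on the integrand by this negative constant reverses the inequality to give
\[
2p\bigl(a(x_1)-a(x_2)\bigr)\int_0^t\frac{\D s}{1+K\rho_2(s)} \le \frac{2p\bigl(a(x_1)-a(x_2)\bigr)}{1+KM_3}\,t.
\]
Exponentiating and inserting this into the quotient yields exactly the claimed bound, and because the exponent coefficient is strictly negative the right-hand side tends to $0$ as $t\to\infty$.

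I expect no serious analytic obstacle here once the quotient representation is in place; the only genuinely delicate point is measure-theoretic bookkeeping. Assumption \ref{Ass}(iii) guarantees $u_1^0>0$ a.e., which ensures the quotient $u_1^0(x_1)/u_1^0(x_2)$ is finite for a.e. pair and that the integral representation of $u_1$ holds for a.e. $x$; this is precisely why the conclusion must be stated a.e. with respect to Lebesgue measure. The remaining care is simply to track the sign correctly when passing from the lower bound on the integrand to the upper bound on the negatively weighted integral.
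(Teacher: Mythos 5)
Your proposal is correct and follows essentially the same route as the paper: both arguments rest on the quotient $u_1(t,x_1)/u_1(t,x_2)$ cancelling the common $-p$ term and the $x$-independent factor $1/(1+K\rho_2)$, followed by the uniform bound $\rho_2\le M_3$ from Lemma \ref{lem:1} and the sign reversal coming from $a(x_1)-a(x_2)<0$. The only cosmetic difference is that you integrate the scalar equation first and then form the quotient, whereas the paper writes the ODE satisfied by the quotient and integrates the resulting differential inequality.
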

The proof of this lemma is deferred to Section \ref{Asympt}.

\noindent Lemma \ref{asympt} yields the following result:
\begin{corollary}\label{MassDistr}
Let $x_1, x_2\in \overline\Omega$ such that $a(x_1)=a(x_2)$. Then,  $\frac{u_1(t,x_1)}{u_1(t,x_2)}$ is constant in time.
\end{corollary}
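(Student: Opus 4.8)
The plan is to exploit the fact that the first equation in \eqref{sys:reg} is, for each fixed $x$, a scalar linear ODE in $t$ whose coefficient separates into an $x$-dependent part and a $t$-dependent part. First I would freeze $x$ and treat $\rho_2(t)$ as a known nonnegative function of time (it is of course determined by the full solution, but its precise form is irrelevant here). Integrating
\[
\frac{\partial}{\partial t} u_1(t,x) = \left(\frac{2a(x)}{1+K\rho_2(t)}-1\right)p\, u_1(t,x)
\]
yields the explicit representation
\[
u_1(t,x) = u_1^0(x)\exp\!\left(2a(x)\,G(t) - pt\right), \qquad G(t) := \int_0^t \frac{p}{1+K\rho_2(s)}\,\D s,
\]
where the function $G$ depends only on $t$ and not on $x$.

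The key observation is that taking the quotient at two points $x_1, x_2$ cancels the entire time-dependent exponential factor as soon as the self-renewal values coincide. Explicitly,
\[
\frac{u_1(t,x_1)}{u_1(t,x_2)} = \frac{u_1^0(x_1)}{u_1^0(x_2)}\,\exp\!\left(2\bigl(a(x_1)-a(x_2)\bigr)G(t)\right),
\]
so under the hypothesis $a(x_1)=a(x_2)$ the exponent vanishes and the ratio collapses to $u_1^0(x_1)/u_1^0(x_2)$, which is manifestly independent of $t$. This is precisely the same computation that underlies Lemma \ref{asympt}, with the strict inequality $a(x_1)-a(x_2)<0$ replaced by an equality: there the surviving exponential drives the ratio to zero, whereas here it is identically one.

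The only point deserving a word of care is well-definedness of the quotient. By Assumption \ref{Ass}(iii) the datum $u_1^0$ is strictly positive a.e., hence $u_1^0(x_2)>0$ for a.e.\ $x_2$, and the explicit formula above then shows $u_1(t,x_2)>0$ for every $t\geq 0$; thus the ratio is well defined for a.e.\ $x_2$ and all $t$, consistent with the a.e.\ qualifier inherited from Lemma \ref{asympt}. I do not anticipate any genuine obstacle: once the separable structure of the exponential rate is noticed, the statement is an immediate consequence of the explicit solution, and no boundedness estimate on $\rho_2$ (such as the constant $M_3$ invoked in Lemma \ref{asympt}) is even required.
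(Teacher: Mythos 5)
Your proof is correct and rests on exactly the same observation as the paper's: the growth rate in the first equation separates as $2a(x)\cdot\frac{p}{1+K\rho_2(t)}-p$, so the quotient $u_1(t,x_1)/u_1(t,x_2)$ satisfies $\partial_t\,(u_1(t,x_1)/u_1(t,x_2)) = 2p\,\frac{a(x_1)-a(x_2)}{1+K\rho_2(t)}\,\frac{u_1(t,x_1)}{u_1(t,x_2)}$, which vanishes when $a(x_1)=a(x_2)$; the paper obtains the corollary directly from this computation in the proof of Lemma~\ref{asympt}, while you integrate the scalar equation explicitly and cancel the common exponential factor. Your remark that no bound on $\rho_2$ (the constant $M_3$) is needed in the equality case is accurate.
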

 As a consequence of Lemma \ref{asympt} we also obtain
\begin{lemma}\label{lem:decay2}
Suppose that Assumptions \ref{Ass} (i) - (iii) hold. Then,  $u(t,x)\to0$, exponentially, as $t\to\infty$ for  $x \notin  \Omega_a$ a.e. with respect to  the Lebesque measure.\end{lemma}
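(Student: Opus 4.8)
The plan is to work from the closed-form representation of $u_1$. For a.e.\ fixed $x$ the first equation of \eqref{sys:reg} is a scalar linear ODE in $t$, so
\[
u_1(t,x)=u_1^0(x)\exp\!\left(p\int_0^t\Big(\tfrac{2a(x)}{1+K\rho_2(\tau)}-1\Big)\D\tau\right)=u_1^0(x)\,e^{2a(x)\phi(t)-pt},
\]
where $\phi(t):=p\int_0^t(1+K\rho_2(\tau))^{-1}\D\tau$. Two elementary properties of $\phi$ drive everything: it is nondecreasing, and since $\rho_2\le M$ by Lemma \ref{lem:1} it grows at least linearly, $\phi(t)\ge pt/(1+KM)\to\infty$. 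The same representation gives the ratio identity $u_1(t,x_1)/u_1(t,x_2)=\tfrac{u_1^0(x_1)}{u_1^0(x_2)}e^{2(a(x_1)-a(x_2))\phi(t)}$ underlying Lemma \ref{asympt}.

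The core of the proof is to turn the relative decay of Lemma \ref{asympt} into absolute decay. The difficulty is that Lemma \ref{asympt} bounds $u_1(t,x)$ only against a reference $u_1(t,\bar x)$ at a maximiser $\bar x\in\Omega_a$, and in case (i) that reference density blows up as mass concentrates, so no single reference point suffices. I would instead integrate the ratio identity over an entire positive-measure set near the maximum. Fix $x\notin\Omega_a$ with $u_1^0(x)$ finite (a.e.), put $\alpha:=\tfrac12\big(a(x)+\max_{y\in\overline\Omega}a(y)\big)$ and $B:=\{y\in\Omega:a(y)>\alpha\}$. Continuity of $a$ and existence of a maximiser in $\overline\Omega$ make $B$ a nonempty open set, hence of positive measure, and Assumption \ref{Ass}(iii) gives $c_B:=\int_B u_1^0\,\D y>0$. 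Writing $u_1(t,y)=u_1(t,x)\tfrac{u_1^0(y)}{u_1^0(x)}e^{2(a(y)-a(x))\phi(t)}$ and integrating over $y\in B$ yields
\[
M_1\ \ge\ \rho_1(t)\ \ge\ \int_B u_1(t,y)\,\D y\ \ge\ \frac{u_1(t,x)}{u_1^0(x)}\,c_B\,e^{2\eta\phi(t)},\qquad \eta:=\alpha-a(x)>0,
\]
using the mass bound of Lemma \ref{lem:1}. Hence $u_1(t,x)\le \tfrac{M_1}{c_B}u_1^0(x)e^{-2\eta\phi(t)}$, and with $\phi(t)\ge pt/(1+KM)$ this is exponential decay of $u_1(t,x)$ for a.e.\ $x\notin\Omega_a$.

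To pass the decay to $u_2$, I would read the second equation of \eqref{sys:reg} as a linear ODE with damping $-d$ and source $2p\big(1-\tfrac{a(x)}{1+K\rho_2(t)}\big)u_1(t,x)$, whose coefficient lies in $(0,2p)$. Variation of constants,
\[
u_2(t,x)=u_2^0(x)e^{-dt}+\int_0^t e^{-d(t-s)}\,2p\Big(1-\tfrac{a(x)}{1+K\rho_2(s)}\Big)u_1(s,x)\,\D s,
\]
together with the exponential bound $u_1(s,x)\le C_x e^{-\beta s}$ just obtained, gives $u_2(t,x)\to0$ exponentially at rate $\min\{d,\beta\}$ by a routine estimate of the convolution. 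I expect the only real obstacle to be the reference-control step in the second paragraph: converting the ratio bound of Lemma \ref{asympt} into pointwise extinction fails for a single reference point in the concentrating case, and it is precisely the uniform mass bound of Lemma \ref{lem:1} combined with the a.e.\ strict positivity of $u_1^0$ that rescues the argument through integration over $B$.
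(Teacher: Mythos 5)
Your proof is correct, and it takes a genuinely different route from the paper's. The paper proves Lemma \ref{lem:decay2} by contradiction: assuming $u(t,\tilde x)\not\to 0$ at some $\tilde x\notin\Omega_a$, it invokes Lemma \ref{asympt} to conclude that $u$ blows up exponentially at every point of the open, positive-measure set $\{x: a(x)>a(\tilde x)\}$, contradicting the uniform mass bound of Lemma \ref{lem:1}. This establishes convergence to zero but leaves the exponential rate of decay of $u$ itself implicit. You invert the same two ingredients (the ratio identity behind Lemma \ref{asympt} and the mass bounds of Lemma \ref{lem:1}) into a direct, quantitative estimate: integrating the explicit representation $u_1(t,y)=u_1(t,x)\tfrac{u_1^0(y)}{u_1^0(x)}e^{2(a(y)-a(x))\phi(t)}$ over the superlevel set $B=\{a>\alpha\}$ and bounding by $\rho_1(t)$ yields $u_1(t,x)\le \tfrac{C}{c_B}u_1^0(x)e^{-2\eta\phi(t)}$ with an explicit rate $2\eta p/(1+KM_3)$; you then propagate the decay to $u_2$ cleanly by Duhamel's formula, whereas the paper handles the second component only through the separate ratio Lemma \ref{asympt2} and treats ``$u$'' somewhat loosely in the contradiction step. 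Your version buys an explicit decay rate and a self-contained treatment of $u_2$; the paper's version is shorter but less quantitative. The only blemishes are notational: the uniform bound on $\rho_1$ is the paper's $M_2$ (not $M_1$, which bounds $u_1/u_2$), and one should note that Assumption \ref{Ass}(iii) indeed forces $u_1^0>0$ a.e.\ so the division by $u_1^0(x)$ is harmless (and the claim is trivial where $u_1^0(x)=0$).
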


\noindent The corresponding proof is presented in Section \ref{Asympt}.\\

 \noindent { {\it Step 3.} Convergence of solutions to stationary measures.}\\

\noindent Convergence to the stationary solutions follows from the property of the total masses of the solutions $(\int_{\Omega}u_1(t,x) \D x,\int_{\Omega}u_2(t,x) \D x)$. 
We show that if { $\bar a = \max\limits_{x\in\overline\Omega} a(x) > \frac12$}, then the solutions converge to the stationary state of the system with $\bar a=\max\limits_{x\in\overline\Omega}a(x)$. 

\begin{theorem}\label{MassConv}
Suppose that Assumptions \ref{Ass} hold, { $\bar a = \max\limits_{x\in\overline\Omega} a(x) > \frac12$} and  \\ $(\rho_1,\rho_2)= (\int_{\Omega}u_1(\cdot,x)\D x,\int_{\Omega}u_2(\cdot,x) \D x)$ be total masses of solutions of \eqref{sys:reg}. It holds that $(\rho_1(t),\rho_2(t))\rightarrow (\bar\rho_1,\bar\rho_2),$  as $t\rightarrow \infty$,
where $(\bar \rho_1,\bar \rho_2)$ are { stationary } solutions of the corresponding ordinary differential equations model with the maximal value of the self-renewal parameter $\bar a$,  { i.e.,
\begin{eqnarray}\label{sys:2eqmaxa}
					0 &=& \left(\frac{2\bar a}{1+K \bar \rho_2}-1\right)p \bar \rho_1, \nonumber\\
					0 &=& 2\left(1-\frac{\bar a}{1+K \bar \rho_2}\right)p \bar \rho_1 - d \bar \rho_2.
\end{eqnarray}}
\end{theorem}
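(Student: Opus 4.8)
The plan is to integrate \eqref{sys:reg} in $x$ so that the dynamics of the total masses becomes a two-dimensional system which, up to a vanishing coupling error, coincides with the finite-dimensional ODE counterpart carrying the maximal self-renewal value $\bar a$; the convergence is then read off from the Lyapunov structure of that limit system. Writing $\beta(t)=1/(1+K\rho_2(t))$ and $A_1(t)=\int_\Omega a(x)u_1(t,x)\,\D x$, integration of \eqref{sys:reg} yields
\begin{align*}
\dot\rho_1 &= 2p\beta A_1 - p\rho_1,\\
\dot\rho_2 &= 2p\rho_1 - 2p\beta A_1 - d\rho_2.
\end{align*}
Setting $w(t):=\int_\Omega(\bar a-a(x))u_1(t,x)\,\D x\ge0$, so that $A_1=\bar a\rho_1-w$, the system reads
\begin{align*}
\dot\rho_1 &= (2\bar a\beta-1)p\rho_1 - 2p\beta w,\\
\dot\rho_2 &= 2(1-\bar a\beta)p\rho_1 + 2p\beta w - d\rho_2.
\end{align*}
Dropping the $w$-terms gives exactly the two-dimensional ODE model with self-renewal $\bar a$, whose nontrivial steady state is the solution $(\bar\rho_1,\bar\rho_2)$ of \eqref{sys:2eqmaxa}, explicitly $\bar\rho_2=(2\bar a-1)/K$ and $\bar\rho_1=d\bar\rho_2/p$, both positive since $\bar a>1/2$.

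The heart of the argument is to show that the coupling error vanishes, $w(t)\to0$. For this I would use the explicit representation $u_1(t,x)=u_1^0(x)\exp\!\big(2a(x)\Phi(t)-pt\big)$ with $\Phi(t)=p\int_0^t\beta(\tau)\,\D\tau$; the common factor $e^{-pt}$ then cancels in the quotient, leaving
\[
\frac{w(t)}{\rho_1(t)}=\frac{\int_\Omega(\bar a-a(x))\,u_1^0(x)\,e^{2a(x)\Phi(t)}\,\D x}{\int_\Omega u_1^0(x)\,e^{2a(x)\Phi(t)}\,\D x},
\]
a weighted average of the nonnegative weight $\bar a-a(x)$ whose mass concentrates near $\Omega_a$ as $\Phi(t)\to\infty$. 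Here $\Phi(t)\to\infty$ because Lemma \ref{lem:1} bounds $\rho_2$ from above, so $\beta$ is bounded below by a positive constant. Splitting the integrals à la Laplace at the level sets $\{a\ge\bar a-\varepsilon\}$ versus $\{a<\bar a-\varepsilon\}$, and using the strict positivity of $u_1^0$ from Assumption \ref{Ass}(iii) (which, by continuity of $a$, guarantees the denominator is dominated by the region where $a$ is close to $\bar a$), gives $\limsup_{t\to\infty} w(t)/\rho_1(t)\le\varepsilon$ for every $\varepsilon>0$; since $\rho_1$ is bounded by Lemma \ref{lem:1}, this forces $w(t)\to0$.

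With $w(t)\to0$, the mass system is an asymptotically autonomous perturbation of the two-dimensional limit system, the perturbation $(-2p\beta w,\,2p\beta w)$ tending to $0$ uniformly on the bounded region where the trajectory lives. For the limit system, the Lyapunov function from \cite{Getto} shows that $(\bar\rho_1,\bar\rho_2)$ is globally asymptotically stable on the positive quadrant, while the origin is the only other equilibrium. Lemma \ref{lem:1} provides both uniform boundedness (so the trajectory $(\rho_1,\rho_2)$ is precompact) and strict positivity (so it stays bounded away from $(0,0)$). Invoking the theory of asymptotically autonomous systems, whose $\omega$-limit sets are internally chain transitive for the limiting semiflow and therefore reduce to the unique globally attracting equilibrium, yields $(\rho_1(t),\rho_2(t))\to(\bar\rho_1,\bar\rho_2)$.

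I expect the main difficulty to lie in the second step. Although Lemma \ref{lem:decay2} already gives pointwise extinction for $x\notin\Omega_a$, the decay rate degenerates as $x\to\Omega_a$, so a plain dominated-convergence argument is unavailable and one genuinely needs the concentration estimate above to control $w$ uniformly. A secondary point requiring care in the last step is to rule out attraction to the origin, which is precisely what the strict-positivity half of Lemma \ref{lem:1} provides.
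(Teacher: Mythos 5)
Your proposal is correct and shares the paper's basic strategy---integrate in $x$, recognise the mass system as the two-dimensional ODE model with parameter $\bar a$ plus a coupling error supported on $\{a<\bar a\}$, and lean on the Lyapunov function of \cite{Getto}---but the two decisive steps are handled by genuinely different means. The paper proves a quantitative perturbation result (Lemma \ref{lem:dynamics}) whose hypothesis is that the error term $f$ be integrable in time, $f\in L^1(\R^+)$, and it argues this from the pointwise exponential decay of Lemma \ref{lem:decay2}; you instead establish only $w(t)\to 0$, via the explicit exponential representation of $u_1$ and a Laplace-type splitting at the level sets of $a$, and then close the argument with the theory of asymptotically autonomous systems (internally chain transitive $\omega$-limit sets of the limiting semiflow), using the strict positivity half of Lemma \ref{lem:1} to exclude the origin. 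Your route is arguably the more robust one: as you note, the decay rate of $u_1(t,x)$ degenerates as $x\to\Omega_a$, and for a nondegenerate single maximum Laplace asymptotics give $w(t)\sim C/t$, which tends to zero but is \emph{not} integrable in time, so the $L^1(\R^+)$ hypothesis of the paper's perturbation lemma is delicate, whereas asymptotic autonomy needs only $f(t)\to 0$. What the paper's approach buys in exchange is elementarity: a self-contained Gronwall estimate on the truncated Lyapunov function $V_a$, with no appeal to chain-transitivity machinery. Both arguments use Lemma \ref{lem:1} in the same way (precompactness of the mass trajectory and separation from the boundary of the positive quadrant) and both identify the same limit equilibrium $\bar\rho_2=(2\bar a-1)/K$, $\bar\rho_1=d\bar\rho_2/p$.
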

{
Direct calculations based on equations \eqref{sys:2eqmaxa} yield
\begin{corollary}\label{Masses}
Total masses converge to the values $\bar \rho_1=\frac{d}{p}\frac{2\bar a-1}{K}$ and $\bar \rho_2=\frac{2\bar a-1}{K}$.
\end{corollary}
}
{ Details of the proof of mass convergence are deferred to Section \ref{SectionMassConv}. \\

\noindent If  $\Omega_a$ consists of a single point $\bar x$ and { $\bar a= \max\limits_{x\in\overline\Omega} a(x) > \frac12$}, then the exponential decay of the solutions outside the set $\Omega_a$  together with the convergence of total masses, yields convergence of the solutions to a stationary measure concentrated at $\bar x$ (a Dirac measure multiplied by a positive constant). In the case of $\Omega_a$ having a positive Lebesgue measure, convergence of solutions  together with Corollary \ref{MassDistr} on the stationary distribution of masses among different domain points yields convergence of solutions to the stationary equilibrium.  Further details of the proof of convergence of solutions to the stationary measures are given in Section \ref{convergence}.
\begin{remark} In the case $\Omega_a=\{ \bar x\}$,  the convergence holds in the weak$*$ topology of Radon measures. In general, we cannot expect the strong (norm-{ total variation}) convergence of the solution to a stationary solution. If the set $\Omega_a \subset \R$
has zero Lebesgue measure and consists of a single point (compare Assumptions  \ref{Ass} (iv)), then the model solutions for any finite time point  are uniformly continuous with respect to the Lebesgue measure and $u_i(t,\cdot) \mathcal L^1 \rightarrow c_i \delta_{\bar x}$, weakly$*$, for $i=1,2$.  Here,  $u_i(t,\cdot)\mathcal L^1$ denotes the measure such that $u$ is its Radon-Nikodym derivative with respect to $\mathcal L^1$. 

Hence, the distance between the two solutions  $TV(u_i(t,\cdot),c_i\delta_{\bar{x}})\ge 2c_i$.  
The problem can be solved by considering convergence with respect to a suitable metric, for example the  flat metric (bounded Lipschitz distance); for details see Section 5.
\end{remark}}
\begin{figure}[htpb] 
\begin{center}
\includegraphics[width=0.45 \textwidth]{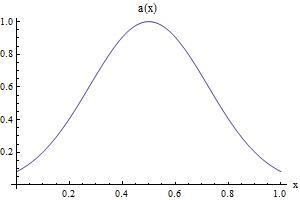} \\
\vspace{1cm}
\includegraphics[width=0.5 \textwidth]{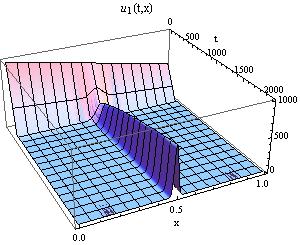} \\
\vspace{1cm}
\includegraphics[width=0.5 \textwidth]{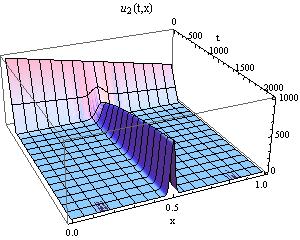}  
\caption{Numerical simulations of the model \eqref{sys:reg} with the self-renewal function $a(x)$ having a single local maximum (shown in the upper panel). Parameters used in  the simulation: $K=0.01$, $p=1$, $d=0.2$ and the initial data: $u_1^0(x) = 1000-500x$,
$u_2^0(x) = 1000x^2$. We observe mass concentration in the point $\bar x =\arg  \max\limits_{x\in \Omega}a(x)$ and convergence of the mass to a stable stationary value.} \label{Fig1}
\end{center}
\end{figure}

{ If the support of { $\bar a$ is not a single point set, then the stationary distribution of masses depends on the initial conditions. If $\Omega_a$ has a positive Lebesgue measure, then the distribution of masses results from Corollary \ref{MassDistr}. If $\Omega_a$ consists of a discrete set of points, then the stationary solution takes the form of a linear combination of Dirac deltas; see Fig \ref{Fig2}. We show that in such case the limit function depends on the shape of $a(x)$ in the neighbourhood of the concentration points.

\begin{theorem}[{\bf Co-existence of different stationary solutions}]\label{concentration2}
Let Assumptions \ref{Ass} (i)-(iii) hold and, additionally, the initial functions $u_1^0, u_2^0 \in C(\Omega)$. Let the set $\Omega_a$ of the maximum values of the self-renewal parameter $a$ (as defined in expression \eqref{omega_a}) consist of two points $\Omega_a=\{\bar x_1, \bar x_2\}$ and $u_1^0$ be strictly positive on $\Omega_a$. Then,
		\begin{itemize}
		\item[(i)] If there exists a diffeomorphism $\Phi \in C^1(U_1)$, where $U_1$ is an open neighbourhood of $\bar x_1$, such that
		\begin{eqnarray}\label{PhiProp}
		\Phi(\bar x_1)&=&\bar x_2,\nonumber\\
		a(x)&=&a(\Phi(x))\quad \text{for all} \quad x\in U_1,
		\end{eqnarray}
		 then solutions $(u_1,u_2)$ of system \eqref{sys:reg} converge to stationary measures, which are linear combinations of Dirac measures concentrated in $\bar x_1$ and $\bar x_2$, multiplied by strictly positive constants. 
		\item[(ii)]  If the mapping $\Phi$ with the properties defined by condition \eqref{PhiProp} is only a homeomorphism with a singular Jacobian of the inverse mapping $\Phi^{-1}$ at $\bar x_2$, then solutions $(u_1,u_2)$ of system \eqref{sys:reg} converge to stationary measures concentrated in $\bar x_2$.
		\end{itemize}
\end{theorem}

The proof of this theorem is deferred to Section \ref{convergence}.
\begin{remark}
If $a$ is an analytic function and $\Omega \subset \R$, then a diffeomorphism satisfying condition \eqref{PhiProp} exists if the first nonconstant nonzero terms of Taylor expansion of the function $a(x)$  are of the same order. 

This observation suggests how to construct $a(x)$ with $\Omega_a=\{\bar x_1, \bar x_2\}$ such that solutions extinct at one of the points of $\Omega_a$. For example, we may define $a(x)$ with $x\in \Omega=[0,1]$ such that 
\begin{equation*}
a(x):=\left\{\begin{array}{ccc}
- (x-\frac{1}{4})^2 + \frac{9}{10} &for& x\in [0, \frac{3}{8}),\\
- (x-\frac{3}{4})^4+\frac{9}{10} &for&  x\in (\frac{5}{8}, 1].
\end{array}\right.
\end{equation*}
and a smooth extension of $a(x)$ on the interval $(\frac{3}{8},\frac{5}{8})$ satisfying $0<a(x)<1$. We obtain $\Omega_a=\{\frac{1}{4},\frac{3}{4}\}$, and a mapping $\Phi(x)=\sqrt{x-\frac{1}{4}}+\frac{3}{4}$ satisfying condition \eqref{PhiProp} on $U_1=(\frac{1}{4}-\varepsilon,\frac{1}{4}+\varepsilon)$, where $\varepsilon<\frac{1}{8}$. Consequently, $\Phi^{-1}(x)= (x-\frac{3}{4})^2+\frac{1}{4}$ and it is singular at $x=\frac{3}{4}$.
Hence, the total mass concentrates at the point $x=\frac{3}{4}$ and there is an extinction of mass at $x=\frac{1}{4}$.
\end{remark}
}

\begin{figure}[htpb] 
\begin{center}
\includegraphics[width=0.45 \textwidth]{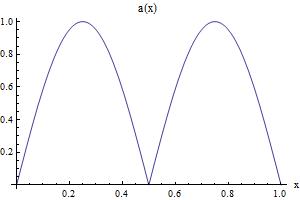} \\
\vspace{1cm}
\includegraphics[width=0.5 \textwidth]{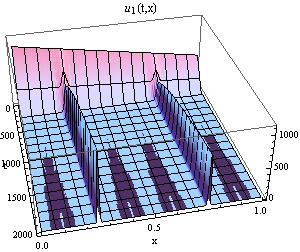} \\
\vspace{1cm}
\includegraphics[width=0.5 \textwidth]{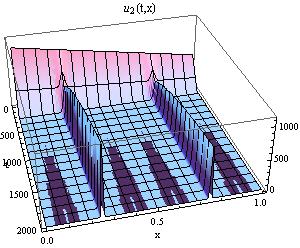}  
\caption{Numerical simulations of the model \eqref{sys:reg} with the self-renewal function $a(x)$ having two equal local maxima (shown in the upper panel) and the parameters the same as in Fig. \ref{Fig1}. We observe mass concentration in two points corresponding to the maximum of the function $a(x)$ with unequal distribution of the mass between the two points.} \label{Fig2}
\end{center}
\end{figure}
}{

\section{Proof of mass concentration}\label{ProofConcentration}
\subsection{Boundedness and strict positivity of masses}\label{boundedness}
All considerations in this Section hold for $x\in \Omega$ a.e. with respect to the Lebesque measure.\\

\noindent { First, we notice that the solutions $(u_1,u_2)$ are nonnegative, since $a(x)/(1+K\rho_2)<1$. 
}{ Before proving Lemma \ref{lem:1}, we show the following technical result.
\begin{lemma}\label{Bound}
Under the assumptions of Lemma~\ref{lem:1}, the function $U = \frac{u_1}{u_2}$ is uniformly bounded on $\Omega \times \R^+$.
\end{lemma}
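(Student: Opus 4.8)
The plan is to pass to the quotient $U = u_1/u_2$ and to show that it obeys a scalar logistic (Riccati-type) differential inequality whose coefficients can be bounded uniformly in $t$ and $x$ \emph{without} any prior control on the size of $\rho_2$. Writing $s(t) = 1/(1+K\rho_2(t))$ and noting that $\rho_2\ge 0$ forces $0 < s(t)\le 1$ unconditionally, I would first record that the solution components are strictly positive for $t>0$: $u_1$ from the explicit exponential formula for the first equation of \eqref{sys:reg} together with Assumption \ref{Ass}(iii), and $u_2$ from the variation-of-constants representation of the second equation, whose source term $2(1-a s)p\,u_1$ is positive. Hence $U$ is well defined and nonnegative.

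Differentiating $U=u_1/u_2$ and substituting \eqref{sys:reg} gives, for a.e. $x$,
\[
\frac{\partial}{\partial t} U = \big[(2a(x)s(t)-1)p + d\big]\,U - 2\big(1-a(x)s(t)\big)p\,U^2 .
\]
The decisive observation is that both coefficients are controlled using only the structural bounds $\tfrac12 < a \le \bar a := \max_{\overline\Omega} a < 1$ and $0 < s \le 1$. Indeed, the linear coefficient satisfies $(2as-1)p + d \le (2\bar a - 1)p + d =: A > 0$, while the quadratic (self-limiting) coefficient satisfies $2(1-as)p \ge 2(1-\bar a)p =: B > 0$, the strict positivity of $B$ coming precisely from $a<1$ combined with $s\le 1$, which yields $a s \le \bar a < 1$. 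Consequently $U$ satisfies $\partial_t U \le A\,U - B\,U^2$ pointwise in $x$.

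A comparison argument with the scalar logistic equation $\dot V = AV - BV^2$, whose nonnegative solutions obey $V(t)\le\max\{V(0),A/B\}$, then yields $U(t,x) \le \max\{U(0,x), A/B\}$. Taking the essential supremum over $x$ gives the uniform bound on $\Omega\times\R^+$ with explicit constant $\max\{\|u_1^0/u_2^0\|_{L^\infty},\,A/B\}$, where finiteness of the first term is guaranteed by the strict positivity of the initial data in Assumption \ref{Ass}(iii).

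The main obstacle will be conceptual rather than computational: one must obtain coefficient bounds for the $U$-equation that are independent of $\rho_2(t)$, since the boundedness of $\rho_2$ is exactly what Lemma \ref{lem:1} is ultimately after, so any estimate invoking it would be circular. The quotient trick resolves this because $s\le 1$ holds with no assumptions and $a\le\bar a<1$ is structural, so the damping term $-2(1-as)p\,U^2$ retains a coefficient bounded below by the positive constant $B$ regardless of the feedback. A secondary technical point to handle with care is the behaviour as $t\to 0^+$, where the well-definedness and finiteness of $U(0,\cdot)=u_1^0/u_2^0$ rest on the strict positivity of $u_2^0$ from Assumption \ref{Ass}(iii); for $t$ bounded away from zero the logistic attractor $A/B$ controls $U$ irrespective of the initial quotient.
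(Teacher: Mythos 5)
Your proposal is correct and follows essentially the same route as the paper: the same Riccati/logistic equation for $U=u_1/u_2$, the same structural bounds ($s\le 1$, $a\le\bar a<1$) giving an upper bound $A$ on the linear coefficient and a positive lower bound $B=2p(1-\bar a)$ on the quadratic damping, and the same comparison yielding $U\le\max\{U(0,\cdot),A/B\}$ (your $A=(2\bar a-1)p+d$ is a marginally sharper constant than the paper's $2p\bar a+d$, which is immaterial). The only caveat, shared equally by the paper's own proof, is that the uniform bound requires $u_1^0/u_2^0\in L^\infty(\Omega)$, which you correctly flag but which does not literally follow from a.e.\ positivity in Assumption~\ref{Ass}(iii).
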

\begin{proof}
The equation for $U(t,x) = \frac{u_1(t,x)}{u_2(t,x)}$ reads for $t>0$
\begin{eqnarray}\label{eq_u}
		\frac{\partial}{\partial t} U(t,x) &=&  U(t,x)\Biggr(p\left(\frac{2a(x)}{1+K\rho_2(t)}-1\right)+d  	\\
		& & -2p\left(1 - \frac{a(x)}{1+K\rho_2(t)}\right)U(t,x)\Biggr) \nonumber.
\end{eqnarray}
Since $$p\left(\frac{2a(x)}{1+K\rho_2(t)}-1\right)+d \leq 2p\bar a +d$$ and $$1 - \frac{a(x)}{1+K\rho_2(t)}>1-\bar a,$$ 
and the right-hand side of equation \eqref{eq_u} is a logistic type nonlinearity, we conclude that
\[U(t,x) \leq \max\left\{U(0,x),\frac{2p\bar{a} +d}{2p(1-\bar{a})}\right\} =: M_1 \quad\forall \; (t,x) \in [0,T)\times\Omega.\]
By definition of $U$, we can infer that
\[u_1(t,x) \leq M_1 u_2(t,x) \quad \forall\; (t,x)\in[0,T)\times\Omega. \]
\end{proof}
 As a straightforward consequence of Lemma \ref{Bound}, we deduce 
\begin{corollary}
Under the assumptions of Lemma~\ref{lem:1}, it holds
\begin{eqnarray}
		\int\limits_{\Omega} u_1(t,x)\,\D x \leq M_1 \int\limits_{\Omega} u_2(t,x)\,\D x = M_1 \rho_2(t). \label{ineq:u12}
\end{eqnarray}
\end{corollary}
\noindent Now we state another technical result in the spirit of Lemma \ref{Bound}.
\begin{lemma}\label{Bound2} 
There exist constants $M_4>0$  and $0<\gamma<1$ such that $\rho_2(t) \leq M_4\rho_1^{\gamma}(t)$ for all $t\geq 0$. 
\end{lemma}
\begin{proof}
Calculating the derivative of the quotient of  $\rho_2(t)$ and $\rho_1^\gamma(t)$, we obtain\\
\begin{eqnarray*}
		\frac{d}{d t} \frac{\rho_2(t)}{\rho_1^{\gamma}(t)} &=& 	\frac{\frac{d}{d t} \rho_2(t) \rho_1^{\gamma}(t) 
		 - 	\rho_2(t) \gamma \rho_1^{\gamma-1}(t)\frac{d}{dt }\rho_1(t)}			{\rho_1^
{2\gamma}(t)} \\
		&=& \frac{\int_{\Omega}\left( 2(1-\frac{a(x)}				{1+K\rho_2(t)})pu_1(t,x) -d u_2(t,x)\right)\,\D x}					{\rho_1^{\gamma}(t)} \\
		& & - \frac{\rho_2(t)}						{\rho_1^
{\gamma}(t)}\frac{\gamma\int_{\Omega} 				\left(\frac{2a(x)}{1+K\rho_2(t)} - 1\right)pu_1(t,x)\,			\D x}{\rho_1} \\
		&\leq& \frac{\int_{\Omega}\left( 2 (1- \frac{a(x)}{1+K\rho_2(t)})pu_1(t,x) -d u_2(t,x) \right)\,\D x}					{\rho_1^{\gamma}(t)} + \frac{\rho_2(t)}						{\rho_1^
{\gamma}(t)}\gamma p \\
		&\leq& 2p\rho_1^{1-\gamma}(t) + \frac{\rho_2(t)}				{\rho_1^{\gamma}(t)}(\gamma p-d) \leq 					2pM_2^{1-\gamma} + \frac{\rho_2(t)}						
{\rho_1^{\gamma}(t)}(\gamma p -d).
\end{eqnarray*} 

\noindent This estimate holds for arbitrary $\gamma \in (0,1)$, so in particular for those satisfying $\gamma p -d < 0$. Arguing as before, we deduce that, for all $t\geq 0$,
\begin{equation}\label{rhoestimate}
\frac{\rho_2}{\rho_1^{\gamma}}(t) \leq \max\left\{\frac{\rho_2(0)}{\rho_1^{\gamma}(0)}, \frac{2pM_2^{1-\gamma}}{d - \gamma p}\right\} =: M_4.
\end{equation}
\end{proof}
\vspace{0.5cm}
Equipped with Lemma \ref{Bound} and Lemma \ref{Bound2}, we prove Lemma \ref{lem:1}.

\begin{proof}[of Lemma \ref{lem:1}]
{\\}

\noindent (i) First, we show uniform boundedness of masses $\rho_1$ and $\rho_2$, {which yields also the global existence of solutions   $(u_1, u_2)\in C^1([0,\infty), L^1(\Omega)\times L^1(\Omega))$}.\\ 

\noindent To show boundedness of $\rho_1$, we apply inequality  \eqref{ineq:u12} to the first equation of system \eqref{sys:reg}
\begin{eqnarray*}
	\frac{\partial}{\partial t} u_1(t,x) &=& \left(\frac{2a(x)}{1+K\rho_2(t)}-1\right)pu_1(t,x) \leq \left(\frac{2a(x)}{1+ \frac{K}{M_1}\rho_1(t)}-1\right)pu_1(t,x) \\
	&\leq& \left(\frac{2\bar{a}}{1+\frac{K}{M_1}\rho_1(t)}-1\right) pu_1(t,x).
\end{eqnarray*}
Integrating this inequality over $\Omega$ yields
\begin{eqnarray}\label{ineq:u1}
		\frac{d}{d t} \rho_1(t) \leq \left(\frac{2\bar{a}}{1+\frac{K}{M_1}\rho_1(t)}-1\right)p\rho_1(t).
\end{eqnarray}
Using a similar argument as in the proof of Lemma \ref{Bound}, we conclude that%
\begin{eqnarray}\label{ineq:rho1}
\rho_1(t) \leq \max\left\{\rho_1(0),\frac{(2\bar{a}-1)M_1}{K}\right\} =: M_2.
\end{eqnarray}
Boundedness of $\rho_2$ results from the second equation of system \eqref{sys:reg}, nonnegativity of $\rho_2$ and the assumptions on $a$.
It holds
\begin{eqnarray*}
		\frac{\partial}{\partial t} u_2(t,x) &=& 2\left(1-\frac{a(x)}							{1+K\rho_2(t)}\right)pu_1(t,x) -d u_2(t,x) \leq 						2pu_1(t,x) -d u_2(t,x).
\end{eqnarray*} 
Integrating over $\Omega$ and using \eqref{ineq:rho1}, we obtain
\[\frac{d}{d t} \rho_2(t) \leq 2p\rho_1(t) -d\rho_2(t) \leq 2pM_2 -d\rho_2(t). \]
Hence, we conclude that 
\begin{eqnarray}\label{ineq:rho2}
\rho_2(t) \leq \max\left\{\rho_2(0),\frac{2pM_2}{d}\right\} =: M_3.
\end{eqnarray}

\noindent (ii) We show that masses  $\rho_1$ and $\rho_2$ have a strictly positive lower bound, uniform in time.

{ 

We estimate the growth of $\rho_1$ using a decomposition of the domain $\Omega=\Omega_- + \Omega_+$, where $\Omega_-:=\{x\in \Omega\left| a(x)\leq \frac12 \right  \}$ and $\Omega_+:=\{x\in \Omega\left| a(x)> \frac12 \right  \}$.\\
 
 First, we assume that the set $\Omega_-$ is nonempty, i.e. $\int_{\Omega_-} u_1^0(x)>0$.
 We denote $$\rho_1^-(t)=\int\limits_{\Omega_-} u_1(t,x)\; \D x \quad \text{and}  \quad \rho_1^+(t)=\int\limits_{\Omega_+} u_1(t,x)\; \D x.$$
Using the explicit form of the solution
\begin{equation}\label{uexplicit}
u_1(t,x)=u^0_1(x)e^{\int \limits_0^t\left(\frac{2a(x)}{1+K\rho_2(\tau)}-1\right)p \;\D \tau}
\end{equation}
 and the properties of the function $a(x)$ on the two subdomains, we obtain
 \begin{eqnarray}\label{rho1quotient}
	\frac{\rho_1^+(t)}{\rho_1^-(t)}  &=& \frac{\int_{\Omega_+} u_1^0(x) e^{\int_0^t \left(\frac{2a(x)}{1+K\rho_2(\tau)} -1\right)p\;\D \tau}\,\D x}{\int_{\Omega_-} u_1^0(x) e^{\int_0^t \left(\frac{2a(x)}{1+K\rho_2(\tau)} -1\right)p\; \D \tau}\,\D x}
	\geq \frac{ \inf_{\Omega_+} e^{\int_0^t \left(\frac{2a(x)}{1+K\rho_2(\tau)} -1\right)p\;\D \tau}\,\int_{\Omega_+} u_1^0(x) \;\D x}{ \sup_{\Omega_-} e^{\int_0^t \left(\frac{2a(x)}{1+K\rho_2(\tau)} -1\right)p\; \D \tau}\,  \int_{\Omega_-} u_1^0(x)\;\D x}\nonumber\\
&=& \frac{ e^{\int_0^t \left(\frac{1}{1+K\rho_2(\tau)} -1\right)p\;\D \tau}\,\int_{\Omega_+} u_1^0(x) \;\D x}{ e^{\int_0^t \left(\frac{1}{1+K\rho_2(\tau)} -1\right)p\; \D \tau}\,  \int_{\Omega_-} u_1^0(x)\;\D x}=\frac{\rho_1^+(0)}{\rho_1^-(0)}.
\end{eqnarray}
Combining estimates \eqref{rho1quotient} and \eqref{rhoestimate} yields
\begin{equation}\label{rhoestimate2}
\rho_2(t) \leq M_4(\rho_1^+(t) + \rho_1^-(t))^{\gamma}\leq M_4 \left( \rho_1^+(t)\left(1+ \frac{\rho_1^+(0)}{\rho_1^-(0)} \right)\right)^{\gamma}=M_5 \left(  \rho_1^+(t)\right)^{\gamma}
\end{equation}
with $M_5=M_4 \left(1+ \frac{\rho_1^+(0)}{\rho_1^-(0)} \right)^{\gamma}$.

With estimate \eqref{rhoestimate2} at hand, we show that $\rho_1^+$ is strictly positive for every $t\in \R^+$. We estimate its dynamics
\begin{eqnarray*}
		\frac{d}{d t} \rho_1^+(t) &=& \int\limits_{\Omega_+} \left(\frac{2a(x)}{1+K\rho_2(t)} -1\right)pu_1(t,x)\,\D x\\ 
		&\geq& \left(\frac{2\underline{a}}{1+KM_5\left(\rho_1(t)^+\right)^{\gamma}} -1\right)p\rho_1(t),
\end{eqnarray*}
where  $\underline{a}= \min\limits_{x\in\overline \Omega_+}a(x)>\frac12$.

 The term in the brackets is strictly positive for $\rho_1^+$ small enough, i.e. for
\[\rho_1^+(t) \leq \left( \frac{2\underline{a} -1}{KM_5}\right)^{\frac{1}{\gamma}},\]
which is a positive constant,  since  $\underline{a}>\frac{1}{2}$.

Hence, we obtain the estimate
\[\rho_1(t) \geq \min\Big\{\rho_1(0),  \left( \frac{2\underline{a} -1}{KM_5}\right)^{\frac{1}{\gamma}}\Big\} =: M_6   \quad\forall \; t \in [0,\infty). \]
}

\noindent  Consequently, we obtain the strict positivity of $\rho_1$ and using the second equation of \eqref{sys:reg},  also the strict positivity of $\rho_2$.
{ In the case of $\Omega_-=\emptyset$, it holds $\rho_1=\rho_1^+$ and the proof is complete if we set   $M_5=M_4$.
}
\end{proof}
\subsection{Asymptotic behaviour of  the solutions}\label{Asympt}
In the next step,  we show that the first component of  the solution of system \eqref{sys:reg} tends to zero for $\bar x \notin\Omega_a$ a.e. with respect to the Lebesque measure.

\begin{proof}[of Lemma \ref{asympt}]
We choose two points $x_1, x_2\in\overline \Omega$ such that $a(x_1) -a(x_2) < 0$, and calculate%
\begin{eqnarray*}
		\frac{\partial}{\partial t} \frac{u_1(t,x_1)}{u_1(t,x_2)} 	
		= p\frac{u_1(t,x_1)}{u_1(t,x_2)}\left(2\frac{a(x_1)-a(x_2)}						{1+K\rho_2(t)}\right) \leq p\frac{u_1(t,x_1)}									{u_1(t,x_2)}\left(2\frac{a(x_1)-a(x_2)}{1+KM_3}\right).
\end{eqnarray*}
Solving the above differential inequality for $\frac{u_1(t,x_1)}{u_1(t,x_2)}$, we obtain the assertion of this Lemma by the choice of $x_1$ and $x_2$. 
\end{proof}

\begin{lemma}\label{asympt2}
Let $x_1,x_2 \in\overline \Omega$ be such that $a(x_1) - a(x_2) < 0$, then 
\[\frac{u_2(t,x_1)}{u_2(t,x_2)} \stackrel{t\rightarrow\infty}{\longrightarrow} 0,\]
a.e. with respect to the Lebesque measure.
\end{lemma}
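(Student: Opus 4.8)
The plan is to transfer the exponential control on the first components, obtained in Lemma~\ref{asympt}, to the second components by studying the ordinary differential equation satisfied by the quotient $V(t):=u_2(t,x_1)/u_2(t,x_2)$, exactly in the spirit of the proof of Lemma~\ref{asympt}. The only difference is that the $u_2$-equations carry both the clearance term $-d u_2$ and a source term proportional to $u_1$. Writing $g_i(t):=2p\bigl(1-a(x_i)/(1+K\rho_2(t))\bigr)$, a direct computation (the $-d u_2$ contributions cancel in the ratio) gives
\[
\frac{\D}{\D t}V(t)=g_1(t)\,U(t,x_2)\,R(t)-g_2(t)\,U(t,x_2)\,V(t),
\]
where $R(t):=u_1(t,x_1)/u_1(t,x_2)$ and $U(t,x_2)=u_1(t,x_2)/u_2(t,x_2)$. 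I would then record three facts: $R(t)\to 0$ exponentially by Lemma~\ref{asympt} (this is where $a(x_1)-a(x_2)<0$ enters); the coefficients are bounded, $2p(1-\bar{a})\le g_i(t)\le 2p$, since $0<a<1$ and $\rho_2\ge 0$; and $U$ is bounded above by Lemma~\ref{Bound}.

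Next I would solve this linear equation for $V$ through the integrating factor $\Phi(t):=\int_0^t g_2(s)U(s,x_2)\,\D s$, obtaining
\[
V(t)=e^{-\Phi(t)}V(0)+\int_0^t e^{-(\Phi(t)-\Phi(s))}\,g_1(s)U(s,x_2)R(s)\,\D s.
\]
Since $g_1/g_2\le 1/(1-\bar{a})=:c_0$, the forcing is dominated by $c_0R(s)\Phi'(s)$, so the integral is at most $c_0$ times a weighted average of $R$ against the sub-probability measure $e^{-(\Phi(t)-\Phi(s))}\Phi'(s)\,\D s$, whose total mass equals $1-e^{-\Phi(t)}\le 1$. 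Splitting this average at a large time $T_0$, using $R(s)\to0$ on $[T_0,\infty)$ and the bound $e^{-(\Phi(t)-\Phi(s))}\le e^{-(\Phi(t)-\Phi(T_0))}$ on $[0,T_0]$, one sees that the head, the tail, and the homogeneous term $e^{-\Phi(t)}V(0)$ all tend to $0$ — provided that $\Phi(t)\to\infty$ as $t\to\infty$.

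The main obstacle is precisely this last requirement, the non-integrability $\int_0^\infty U(s,x_2)\,\D s=\infty$ (equivalently $\Phi(t)\to\infty$): it is what forces the weight to concentrate at recent times, where $R$ is already small, rather than retaining the large initial values of $R$. To establish it I would use the explicit representation obtained by integrating the first equation of \eqref{sys:reg},
\[
u_1(t,x)=u_1^0(x)\exp\bigl(2a(x)A(t)-pt\bigr),\qquad A(t):=\int_0^t\frac{p}{1+K\rho_2(s)}\,\D s,
\]
together with the Duhamel formula for $u_2$ and the uniform positive lower bound $\rho_1(t)\ge M_5$ from Lemma~\ref{lem:1}. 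Since $\rho_1(t)=e^{-pt}\int_\Omega u_1^0(x)e^{2a(x)A(t)}\,\D x$ must stay bounded away from $0$, the growth of $A(t)$ is pinned down (essentially $A(t)\sim pt/(2\bar{a})$ up to lower-order corrections), which controls the decay rate of $u_1(\cdot,x_2)$ relative to the clearance kernel $e^{-d(t-s)}$ and hence the behaviour of $U(\cdot,x_2)$. Verifying that this source decay does not outrun $d$ — so that $U(\cdot,x_2)$ is genuinely non-integrable — is the delicate point; I expect this quantitative comparison of the source decay rate against the clearance rate $d$ (which may require care, or an additional restriction, when $a(x_2)$ is small and $d$ is small) to be the real crux of the argument.
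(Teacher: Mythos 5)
Your reduction is exactly the paper's: you form the quotient $V=u_2(\cdot,x_1)/u_2(\cdot,x_2)$, note that the $-d\,u_2$ terms cancel, arrive at $V'=U(t,x_2)\bigl(g_1R-g_2V\bigr)$, and feed in the exponential decay of $R$ from Lemma~\ref{asympt}. Up to that point everything is correct, and your Duhamel representation with the weight $e^{-(\Phi(t)-\Phi(s))}\Phi'(s)\,\D s$ is a clean way to finish \emph{provided} $\Phi(t)\to\infty$. The paper stops earlier: it simply asserts the bound $V(t)\le\bigl(1-\tfrac{a(x_1)}{1+KM_3}\bigr)\tfrac{u_1^0(x_1)}{u_1^0(x_2)}e^{2(a(x_1)-a(x_2))t/(1+KM_3)}/(1-a(x_2))$, i.e.\ the quasi-steady-state value $g_1R/g_2$ of the ODE, without addressing the fact that relaxation toward that value is modulated by the prefactor $U(t,x_2)$. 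That is precisely the issue you isolate, so your analysis is in this respect more careful than the paper's own proof.

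The gap is the unproven claim $\Phi(t)=\int_0^t g_2(s)U(s,x_2)\,\D s\to\infty$, and it cannot be closed under Assumptions~\ref{Ass} alone because it can fail. Since $\rho_2(t)\to\bar\rho_2$ with $1+K\bar\rho_2=2\bar a$ (Theorem~\ref{MassConv}, whose proof relies only on Lemma~\ref{asympt} via Lemma~\ref{lem:decay2}, not on this lemma), the first equation gives $u_1(t,x)=u_1^0(x)\exp(-\lambda(x)t+o(t))$ with $\lambda(x)=p(\bar a-a(x))/\bar a$. If $\lambda(x_2)>d$ --- compatible with the assumptions, e.g.\ $p=1$, $d=0.2$, $a(x_2)=0.55$, $\bar a=0.9$ --- then Duhamel for $u_2$ yields $u_2(t,x_2)=e^{-dt}\bigl(u_2^0(x_2)+\int_0^t e^{ds}g_2(s)u_1(s,x_2)\,\D s\bigr)\sim c_2e^{-dt}$ with $c_2\ge u_2^0(x_2)>0$, so $U(t,x_2)$ decays like $e^{-(\lambda(x_2)-d)t}$ and is integrable; your own formula then gives $V(t)\ge e^{-\Phi(\infty)}V(0)>0$. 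Equivalently, since $a(x_1)<a(x_2)$ forces $\lambda(x_1)>\lambda(x_2)>d$, both $u_2(t,x_i)\sim c_ie^{-dt}$ and $V(t)\to c_1/c_2>0$: the conclusion of the lemma fails. It holds only under the extra restriction $p(\bar a-a(x_2))\le d\,\bar a$ (the source decay must not outrun the clearance rate), which is exactly the caveat you anticipated. Your proposed rescue via $\rho_1\ge M_5$ cannot work: the lower bound on the total mass pins down $A(t)$ and hence the behaviour of $u_1$ only near $\Omega_a$, and says nothing that would slow the decay of $u_1(\cdot,x_2)$ at a fixed $x_2$ with $a(x_2)$ well below $\bar a$.
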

\begin{proof}
		We use a similar ansatz as in Lemma \ref{asympt} and 				calculate for $t>0$ \\
		\begin{eqnarray*}
				\frac{\partial}{\partial t} \frac{u_2(t,x_1)}{u_2(t,x_2)} &=& 						2\left(1-\frac{a(x_1)}{1+K\rho_2(t)}\right) 					p\frac{u_1(t,x_1)}{u_2(t,x_2)} \\
				& & - 				
				2\left(1-\frac{a(x_2)}{1+K\rho_2(t)}\right)p 				\frac{u_2(t,x_1)}{u_2(t,x_2)}\frac{u_1(t,x_2)}					{u_2(t,x_2)}.
		\end{eqnarray*}
		Applying Lemma \ref{asympt}, we obtain 
		\begin{eqnarray*}
		\frac{\partial}{\partial t} \frac{u_2(t,x_1)}{u_2(t,x_2)} &=& 					p\frac{u_1(t,x_2)}											{u_2(t,x_2)}\Bigg(2\left(1-\frac{a(x_1)}				
		{1+K\rho_2}\right)\frac{u_1^0(x_1)}							{u_1^0(x_2)}e^{\frac{2\left(a(x_1)-a(x_2)\right)t}						{1+KM_3}} \\
		& & - 2\left(1-\frac{a(x_2)}								{1+K\rho_2}\right)\frac{u_2(t,x_1)}							{u_2(t,x_2)}\Bigg).
		\end{eqnarray*}
		Thus, we deduce the following bound for 					$\frac{u_2(t,x_1)}{u_2(t,x_2)}$\\
	{	\[\frac{u_2(t,x_1)}{u_2(t,x_2)} \leq 							  \frac{\left(1-\frac{a(x_1)}								  {1+KM_3}\right)\frac{u_1^0(x_1)}				
		  {u_1^0(x_2)}e^{\frac{2(a(x_1) - a(x_2))t}					  {1+KM_3}}}{1- a(x_2)},\]}\\
		where the right hand side tends exponentially to zero, as $t$ 			tends to infinity. 
		
	\noindent	This concludes the proof. 
\end{proof}
 Having shown the dynamics of the ratios of the values of a solution at different $x$ points, we prove that the solutions converge to zero outside the set of points with a maximum value of the parameter $a(x)$.
\begin{proof}[of Lemma \ref{lem:decay2}]
Let $\tilde x$ be a point different from $\bar x$ and assume that \\$\lim_{t \rightarrow \infty} u(t,\tilde x) > 0$. Continuity of $a(x)$ implies that the set of $x$, such that $a(x)>a(\tilde x)$, is an open nonempty 
set and, therefore, it has positive measure. Since Lemma \ref{asympt} holds for every $x, \tilde x \in\overline \Omega$ such that $a(x) - a(\tilde x) > 0$, we conclude that $u(t,x)$ tends exponentially to $+ \infty$  
for every $x$ such that $a(x)>a(\tilde x)$. This is, however, in contradiction with the uniform boundedness of the mass $\int_{\Omega}u(t,x)\D x$.
\end{proof}

\section{Proof of convergence of the total mass}\label{SectionMassConv}

We begin the proof of Theorem  \ref{MassConv} by showing the following lemma, which allows comparing two dynamical systems. \begin{lemma}\label{lem:dynamics}
{ Let $t\rightarrow X_F (t,\cdot)$ be a one-parameter family of $C^1$-diffeomorphisms (semiflows) $X_F (t, (0,\infty)\times(0,\infty))\subset  (0,\infty)\times(0,\infty)$, for every $t\geq 0$, generated by the ordinary differential equation
\begin{equation}\label{ODEproblem}
\frac{du}{dt}=F(u)
\end{equation} 
such that $V\in C^1((0,\infty)\times(0,\infty))$, with a single minimum $\bar u$, is a strict Lyapunov functional, i.e.  $\frac{d}{dt}X_F(t,u)|_{t=0}\cdot \nabla V(u) = 0$ for $u=\bar u$ and $\frac{d}{dt}X_F(t,u)|_{t=0}\cdot \nabla V(u) < 0$ otherwise.  Then, if $\tilde u$ is a solution of 
\begin{equation}\label{perturbedproblem}
\frac{d \tilde u}{dt}=F(\tilde u) + f,
\end{equation}
 where $lim_{t\rightarrow\infty}sup_{\tau\in[t,\infty)}|f(\tau)|=0$ and $\overline {\rm Im (\tilde u(\cdot))}:=\overline{\cup_{t\in [0,\infty)} \{\tilde u(t)\}} \subset (0,\infty)\times (0,\infty)$ is compact, then  $\tilde u(t) \rightarrow \bar u$ for $t \rightarrow \infty$.}
\end{lemma} 
\begin{proof}

\noindent For arbitrary $a>\bar V$, we define a truncation\\
\begin{equation*}
V_a(u):=\left\{\begin{array}{ccc}
V(u)-a &\text{ if }& V(u)\geq a,\\
0 &\text{ if }& V(u)<a.
\end{array}\right.
\end{equation*} \\
Since $V_a \in W^{1,\infty}(U)$, where $U$ is the intersection of all convex sets containing $\overline{\rm Im(\tilde u(\cdot))}$,  {$U={\rm conv}(\overline{\rm Im(\tilde u(\cdot))})\subset (0,\infty)\times(0,\infty)$}, and $\frac{d}{d t} \tilde u \in L^1(\Omega)$, then we can define  the time derivative of $V_a(\tilde u(t))$ using the chain rule. 
$\nabla_{u} V_a$ is defined in a classical sense only outside the set $V( u)=a$, but it has a Clarke derivative, i.e. a generalised subdifferential for a locally Lipschitz function  \cite{Clarke}, on the set $V=a$.  In the following, $\nabla_{\tilde u} V_a(\tilde u)$  is an extension of the classical definition, involving the maximal element of
the Clarke derivative, to the set where the classical derivative is not defined.  \\

\noindent Let us define $\beta: \overline{{\rm Im} V(u)}\rightarrow (0,\infty)$ such that
\begin{equation*}
\beta(x)=\inf\limits_{\{u\in U| V_a(u)=x\}}\left\{\frac{d}{dt}X_F(t,u)|_{t=0}\cdot V(u) \right\}.
\end{equation*}
Since $\beta$ is a continuous function defined on a compact set, it achieves a strictly positive minimum. Furthermore, for the truncation function $V_a$, there exists a positive constant $\tilde \beta_a$ such that $\beta(V_a)\geq  \tilde \beta_a V_a$.
Hence, we obtain\\
\begin{equation}\label{dV_a}
\frac{dV_a(\tilde u(t))}{dt}\leq - \tilde \beta_a V_a(\tilde u(t))+ \nabla_{\tilde u} V_a(\tilde u(t)) \cdot f(t).
\end{equation} 
Using compactness of the set $U$, we estimate $\nabla_{\tilde u} V_a(\tilde u(t)) $ by its $L^{\infty}$ norm, which yields the following inequality, 
\begin{equation*}\label{dV_a2}
\frac{dV_a(\tilde u(t))}{dt}\leq - \tilde \beta_a V_a(\tilde u(t))+ C |f(t)|,
\end{equation*}
where  $C = \left\|\nabla_{\tilde{u}}V\right\|_{L^{\infty}(U)}$.

\noindent Integrating the above estimate, we obtain 
\begin{equation}\label{Vaest}
V_a(\tilde u(t))\leq V_a(u_0)e^{-\tilde \beta_a t} + \int_0^t  |f(\tau)| e^{-\tilde \beta_a (t-\tau)}\D\tau.
\end{equation}
 We show that the right-hand side of inequality \eqref{Vaest} tends to zero for $t \rightarrow \infty$.
\begin{eqnarray*}
&&\int_0^t  |f(\tau)| e^{-\tilde \beta_a (t-\tau)}\D\tau = \int_0^{\frac{t}{2}}  |f(\tau)| e^{-\tilde \beta_a (t-\tau)}\D\tau +\int_{\frac{t}{2}}^t  |f(\tau)| e^{-\tilde \beta_a (t-\tau)}\D\tau \\
&&\leq \sup_{\tau\in \R^+}|f(\tau)| \int_0^{\frac{t}{2}} e^{-\tilde \beta_a (t-\tau)}\D\tau +  \sup_{\tau\in [\frac{t}{2},\infty]}|f(\tau)| \;\int_{\frac{t}{2}}^t  e^{-\tilde \beta_a (t-\tau)}\D\tau\\
&& \leq \sup_{\tau\in \R^+}|f(\tau)| \; \frac{1}{\tilde \beta_a}\;e^{-\frac{\tilde \beta_a t}{2}} \left( 1- e^{-\frac{\tilde \beta_a t}{2}}\right) +  \sup_{\tau\in [\frac{t}{2},\infty]}|f(\tau)| \;  \frac{1}{\tilde \beta_a}\; \left( 1- e^{-\frac{\tilde \beta_a t}{2}}\right). 
\end{eqnarray*}
Since, by assumption $\lim_{t \rightarrow \infty} \sup_{\tau\in [\frac{t}{2},\infty]}|f(\tau)|=0$, passing to the limit, we obtain $$\lim_{t \rightarrow \infty} \int_0^t  |f(\tau)| e^{-\tilde \beta_a (t-\tau)}\D\tau =0.$$
Convergence holds for every $a$, which yields convergence  $V(\tilde u(t))\rightarrow \bar V$, i.e. to
 the minimum of the function $V$. In turn, this ensures that $\tilde u(t)\rightarrow \bar u$. 
\end{proof}
}

\begin{proof}[of Theorem \ref{MassConv}] 
To apply Lemma \ref{lem:dynamics} to system \eqref{sys:reg}, we consider a finite dimensional model obtained by setting $a(x)$ to a constant value $\bar a$
\begin{eqnarray}\label{sys:2eq}
					\frac{d}{d t} v_1 &=& \left(\frac{2\bar a}{1+K v_2}-1\right)p v_1, \nonumber\\
					\frac{d}{d t} v_2 &=& 2\left(1-\frac{\bar a}{1+K v_2}\right)p v_1 - d v_2, \\
					v_1(0) &=& v_1^0,\nonumber \\
					v_2(0) &=& v_2^0. 	\nonumber
					\end{eqnarray}
{ Note that the above equation generates a $C^1$-semiflow, which is invariant on $(0,\infty) \times (0,\infty).$ }	
\noindent  We check that the two systems \eqref{sys:2eq} and \eqref{sys:mass2} fulfill the assumptions of Lemma \ref{lem:dynamics}. 
				
Lyapunov function for system \eqref{sys:2eq} has been previously constructed in Ref. \cite{Getto}. It assumes the form
\begin{equation}
V(v_1,v_2):=\frac{1}{pG(\bar v_2)}V_{1}(v_1)+\frac{1}{d}V_{2}(v_2),\label{Lyapunov}
\end{equation}
where
\begin{eqnarray*}
V_{1}(v_1) & :=&\frac{v_1}{\bar v_{1}}-1-\ln\frac{v_1}{\bar v_1},\\
V_{2}(v_2) & :=&\frac{v_2}{\bar v_2}-1-\frac{1}{\bar v_2}\int_{\bar v_2}^{v_2}\frac{G(\bar v_2)}{G(\xi)}d\xi,
\end{eqnarray*}
$(\bar v_{1},\bar v_{2})$ is the stationary solution, and
\begin{equation}
G(v_2):=2\left(1-\frac{\bar a}{1+kv_2}\right)\text{ for }v_2\geq0.\label{G}
\end{equation}

\noindent { Lyapunov function \eqref{Lyapunov} is well-defined for every $(v_1,v_2)\in (0,\infty)\times (0,\infty)$. Moreover, $V\in C^{\infty}(0,\infty)\times (0,\infty)$.}
{ \noindent Note that $V_1(v_1)$ is strictly convex and therefore $\frac{\partial}{\partial v_1} V_1 \not= 0$ for $v_1\not= \bar v_1$. Similar observation holds for $V_2(v_2)$. Hence $(\bar v_1,\bar v_2)$ is the global minimum of the Lyapunov function.}  \\

\noindent Direct calculations, as provided in \cite{Getto}, allow to check that
\begin{equation}
\frac{d}{dt}V(v_1(t),v_2(t))\leq0,\label{eq:non_pos2}
\end{equation}
for the solutions of system \eqref{sys:2eq}. { Moreover, the equality $\frac{d}{dt}V(v_1(t),v_2(t))=0$ holds only for the stationary solution $(\bar v_1, \bar v_2)$}.\\

\noindent To show convergence of the total mass of the solution of system \eqref{sys:reg} to a global equilibrium, we integrate equations \eqref{sys:reg} with respect to $x$ and obtain\\
\begin{eqnarray}\label{sys:mass}
					\frac{d}{d t} \rho_1(t) &=& \int_{\Omega}\left(\frac{2a(x)}{1+K \rho_2(t)}-1\right)p u_1(t,x)\D x, \nonumber\\
					\frac{d}{d t} \rho_2(t) &=& 2\int_{\Omega} \left(1-\frac{a(x)}{1+K \rho_2(t)}\right)p u_1(t,x)\D x-  d \int_{\Omega} u_2(t,x) \D x, \\
					\rho_1(0) &=& \int_{\Omega} u_1^0(x) \D x, \nonumber\\
					\rho_2(0) &=& \int_{\Omega}u_2^0(x) \D x.\nonumber
\end{eqnarray}
This can be rewritten as
\begin{eqnarray}\label{sys:mass2}
					\frac{d}{d t} \rho_1(t) &=& \left(\frac{2 \bar a}{1+K \rho_2(t)}-1\right)p \rho_1(t) + \frac{2p}{1+K\rho_2(t)} \int_{\Omega}\left(a(x)-\bar a\right)u_1(t,x) \D x,\nonumber \\
					\frac{d}{d t} \rho_2(t) &=& 2 \left(1-\frac{\bar a}{1+K \rho_2(t)}\right)p \rho_1(t)\nonumber\\ & & + \frac{2p}{1+K \rho_2(t)}\int_{\Omega} \left(\bar a - a(x)\right) u_1(t,x)\D x -d\rho_2(t), \\
					\rho_1(0) &=& \int_{\Omega} u_1^0(x) \D x,\nonumber \\
					\rho_2(0) &=& \int_{\Omega}u_2^0(x) \D x.\nonumber
\end{eqnarray}

\noindent { By Lemma \ref{lem:1}, $\overline {\rm Im ((\rho_1(\cdot),\rho_1(\cdot))}\subset (0,\infty)\times (0,\infty)$ and it  is compact.
 \\
 
\noindent To show that the perturbation function on the right-hand side converges to zero as $t\rightarrow \infty$, we calculate
\begin{eqnarray*}
\int_{\Omega}\left(a(x)-\tilde a\right)u_1(t,x) \D x = \int_{\Omega_a}\left(a(x)-\tilde a\right)u_1(t,x) \D x + \int_{\Omega \setminus \Omega_a}\left(a(x)-\tilde a\right)u_1(t,x) \D x,
\end{eqnarray*}
where $\Omega_a$ is defined in the expression \eqref{omega_a}. Consequently, using boundedness of $\rho_1$, boundedness of $a(x)$ as well as Lemma \ref{lem:decay2},  we obtain that
\begin{eqnarray*}
\int_{\Omega}\left(a(x)-\tilde a\right)u_1(t,x) \D x \:\: { \stackrel{t\rightarrow\infty}{\longrightarrow}}\:\: 0,
\end{eqnarray*}
and hence we conclude that system \eqref{sys:mass2} fulfills the assumptions of Lemma \ref{lem:dynamics}. Consequently, we obtain that the total mass of a solution of system \eqref{sys:reg} converges to a globally stable 
equilibrium, which is equal to the equilibrium of  the ordinary differential equations model  \eqref{sys:2eq} corresponding to the maximum value of the self-renewal parameter $\bar a$. 
} 
Thus, we have proven the assertion of Theorem \ref{MassConv}.
\end{proof}
\begin{figure}[h]
\begin{center}
\includegraphics[scale=0.6]{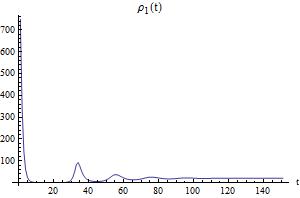}
\includegraphics[scale=0.6]{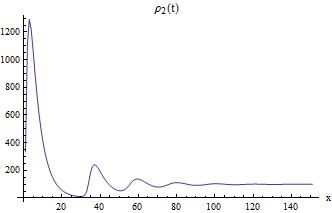}
\caption{Using the trapezoid rule to approximate the integral of $\rho_1(t),\rho_2(t)$, we observe numerically the convergence of the total mass to a constant value. The parameter set is the same as in Figure \ref{Fig1}.}
\end{center}
\end{figure}\label{Fig.3}

\section{Proof of the convergence result}\label{convergence}

{  Finally, we obtain the main assertion.
\begin{proof}[\bf of Theorem \ref{thm:concentration}]
{\\}
  Lemma \ref{lem:decay2} implies that the solutions of system \eqref{sys:reg} decay exponentially to zero in all points $x \notin \Omega_a$. We consider two cases (compare Assumptions \ref{Ass} (iv)):
 \begin{itemize}
 \item[(i)] $\Omega_a= \{ \bar x\} $:\\
Convergence to a stationary solution follows from the convergence of mass given by Theorem \ref{MassConv}.  { Hence, the solutions converge to measures concentrated at $\bar x$: 
$$u_i(t,\cdot)\mathcal L^1  \: \stackrel{t\rightarrow\infty}{\longrightarrow} c_i \delta_{\bar x}, \quad \text{for} \quad i=1,2,$$
where $\mathcal L^1$ denotes a one dimensional Lebesgue measure and $u_i(t,\cdot)\mathcal L^1$ is the measure which Radon-Nikodym derivative with respect to $\mathcal L^1$ is equal to $u$, $\delta_{\bar x}$ is a Dirac measure localised at $\bar x$ and  $c_i$, $i=1,2$, are the stationary masses, i.e. $c_1=\bar \rho_1=\frac{d}{p}\frac{2\bar a-1}{K}$ and $c_2=\bar \rho_2=\frac{2\bar a-1}{K}$.}\\

The convergence result can be understood in a suitable metric on the space of positive Radon measures.
We apply here the flat metric $\rho_F $, also known as the bounded Lipschitz distance \cite{Neunzert}. For completeness of presentation, the definition and basic properties of this metric are provided in Appendix.\\

{To estimate the distance between a solution $u_i(t,\cdot)$ and the stationary measure $c_i \delta_{\bar x}$, $i=1,2$,  we use the following inequality for the distance of two measures $\mu$ and $\nu$
 \begin{equation}\label{estimate_metric}
 \rho_{F}(\mu,\nu)\le \min\{\mu(\Omega),\nu(\Omega)\} W_1\left(\frac{\mu}{\mu(\Omega)},\frac{\nu}{\nu(\Omega)}\right) + |\mu(\Omega)-\nu(\Omega)|.
 \end{equation}
For the proof of this inequality we refer to \cite{CGU} and \newline \cite{JJAMC}. Here $W_1\left(\frac{\mu}{\mu(\Omega)},\frac{\nu}{\nu(\Omega)}\right)$ denotes the Wasserstein distance between two probabilistic measures; { see Appendix for the definition of the Wasserstein metric.}\\

{ We calculate, for $i=1,2$,
 \begin{equation}\label{estimate_metric_our}
 \rho_{F}\left(u_i(t,\cdot)\mathcal L^1,c_i \delta_{\bar x}\right)\le \min\{\rho_i,c_i)\} W_1\left( \frac{u_i(t,\cdot)\mathcal L^1}{\rho_i},\delta_{\bar x} \right) + |\rho_i-c_i |.
  \end{equation}

 The first term on the right hand-side of inequality \eqref{estimate_metric_our} can be estimated using the exponential estimates of Lemma \ref{asympt}. To show that it converges to zero we apply the Kantorovich-Rubinstein Theorem \cite{Villani1,Villani2} and use the equivalent definition of the Wasserstein metric given as the cost of optimal transport with the cost function $|x - y|$, i.e.
\begin{equation}\label{T_defi}
  W_1\left( \frac{\mu}{\mu(\Omega)}, \frac{\nu}{\nu(\Omega)}\right) : = \inf_{\gamma \in {\mathcal P}(\Omega)\times {\mathcal P}(\Omega)}
\int_{\Omega \times \Omega} |x - y|\;   \gamma(\D x,\D y),
\end{equation}
where $\gamma\in \Gamma\left( \frac{\mu}{\mu(\Omega)}, \frac{\nu}{\nu(\Omega)}\right) $ is a joint distribution (probabilistic measure) with the marginal distributions $ \frac{\mu}{\mu(\Omega)}$ and $\ \frac{\nu}{\nu(\Omega)}$, and where
 \begin{eqnarray*}
\Gamma\left(\frac{\mu}{\mu(\Omega)},\frac{\nu}{\nu(\Omega)}\right) =&& \Big\{
\gamma \in {\mathcal P}(\Omega \times \Omega)\gamma(B \times \Omega) = \frac{\mu(B)}{\mu(\Omega)},  \\
&&\;
\gamma(\Omega \times B) = \frac{\nu(B)}{\nu(\Omega)}, \;
B \in {\mathcal B}(\Omega)
\Big\}.
\end{eqnarray*}
is the family of all joint distributions with marginal distributions $ \frac{\mu}{\mu(\Omega)}$ and $\ \frac{\nu}{\nu(\Omega)}$.\\

\noindent  We estimate the difference between a normalised solution $$\pi_i(t):=\frac{u_i(t,\cdot)}{\rho_i(t)}\mathcal L^1$$ and its  limit  $\delta_{\bar x}$, $i=1,2$.
Using a joint distribution $\gamma_i =  \delta_{\bar x} \otimes \pi_i$, $i=1,2$, we obtain
\begin{equation}\label{W1est}
  W_1(\pi_i(t), \delta_{\bar x}) \leq  
\int_{\Omega} |\bar x - y|\;   \pi_i(t) (\D y),
\end{equation}
To show that the right-hand side of inequality \eqref{W1est} converges to zero, we define a set $\Omega_{a-{\varepsilon}}=\left \{x: a(x)>\bar a - \varepsilon \right\}$.  For $\varepsilon$ small enough, there exists $\tilde \varepsilon>0$ such that the set $\Omega_{a- \varepsilon}$ is contained in a $\tilde{\varepsilon}-$neighbourhood of $\Omega_{a}$, i.e. $\Omega_{a- \varepsilon} \in [\bar x- \tilde{\varepsilon}, \bar x + \tilde{\varepsilon}]$. By Lemma \ref{asympt}, $\pi_i(t)\left(\Omega \setminus [\bar x- \tilde{\varepsilon}, \bar x + \tilde{\varepsilon}]\right) \rightarrow 0$ for $t\rightarrow \infty$.
Therefore, we obtain
\begin{eqnarray*}
 W_1(\pi_i(t), \delta_{\bar x})&\leq& \int_{\Omega\setminus [\bar x- \tilde{\varepsilon}, \bar x + \tilde{\varepsilon}]} |\bar x - y|\;   \pi_i(t) (\D y) + \int_{[\bar x- \tilde{\varepsilon}, \bar x + \tilde{\varepsilon}]} |\bar x - y|\;   \pi_i(t) (\D y) \\
&\leq& \sup \limits_{x\in \Omega}  |\bar x - x| \pi_i(t)\left(\Omega \setminus [\bar x- \tilde{\varepsilon}, \bar x + \tilde{\varepsilon}]\right) + \tilde{\varepsilon}\rightarrow \tilde{\varepsilon}, \quad \text{for} \quad t\rightarrow \infty. 
 \end{eqnarray*}
 Since the above convergence holds for any $\tilde{\varepsilon}>0$, we conclude that
 $$\lim \limits_{t \rightarrow \infty} W_1(\pi_i(t), \delta_{\bar x}) = 0.$$
 
 Convergence of the second term in formula \eqref{estimate_metric_our} is due to Theorem \ref{MassConv}.  Hence, we obtain that  $$\lim \limits_{t \rightarrow \infty} \rho_{F}\left(u_i(\cdot,t)\mathcal L^1,c_i \delta_{\bar x}\right)=0.$$
}}
\\

\item[(ii)]  $\mathcal L^1 (\Omega_a)>0$:

If $\Omega_a$ is a set with positive measure, no singularities emerge due to the uniform boundedness of 
the total mass. In this case,  the solution tends to zero outside $\Omega_a$ and to a positive $L^1$-function on $\Omega_a$. Following Corollary \ref{MassDistr}, we conclude that the exact shape of the limit solution depends on the initial distribution. 
{
\item[(iii)]  If $\bar a = \max\limits_{x\in\overline\Omega} a(x) \leq \frac12$, then the solutions converge exponentially to zero, what is a consequence of equations \eqref{sys:reg}.  We estimate
$$\frac{d}{dt}\rho_1(t)\leq \left (\frac{1}{1+K \rho_2(t)}-1\right )p\rho_1(t) \leq -C \rho_1(t), $$
where $C= - \left(\frac{1}{1+K \min_{t\in[0,\infty)}\rho_2(t)}-1\right)p >0$, due to Lemma \ref{lem:1}. Hence, using the Gronwall inequality, we obtain the exponential decay to zero. Finally, convergence $\rho_2(t) \rightarrow 0$ as $t\rightarrow \infty$ follows from the estimate
$$\frac{d}{dt}\rho_2(t)\leq 2p \rho_1(t) -d \rho_2(t).$$
Since the solutions $(u_1,u_2)$ are nonnegative,  they converge to zero in $L^1(\Omega)$. }
\end{itemize}
\end{proof}
}

{
{\noindent Finally, we analyse the case with $\Omega_a$ consisting of two points and prove the co-existence and the extinction result.
\begin{proof}[\bf of Theorem \ref{concentration2}]
{\\}
(i) We investigate dynamics of the mass of a solution of system \eqref{sys:reg} around the points of $\Omega_a$. Let us assume that there exists a diffeomorphism $\Phi \in C^1(U_1)$, where $U_1$ is an open neighbourhood of $\bar x_1$, such that $\Phi(\bar x_1)=\bar x_2$ and $a(x)=a(\Phi(x))$ for all $x\in U_1$.
Using the explicit form of the solution \eqref{uexplicit} and the property $\Phi(\bar x_1)=\bar x_2$,
we obtain
\begin{equation}\label{balls1}
 \int \limits_{U_1} u_1(t,x) \D x = \int \limits_{U_1} u_1(t,\Phi(x)) \frac{u^0_1(x)}{u^0_1(\Phi(x))} \D x,
 \end{equation}
Changing variables on the right hand-side of \eqref{balls1} leads to
\begin{equation}\label{balls2}
 \int \limits_{U_1} u_1(t,x) \D x = \int \limits_{\Phi(U_1)} u_1(t,y) \frac{u^0_1(\Phi^{-1}(y))}{u^0_1(y)} J\Phi^{-1}(y) \D y,
 \end{equation}
where $J\Phi$ is Jacobian of the diffeomorphism $\Phi$.

Since $\frac{u^0_1(\Phi^{-1}(y))}{u^0_1(y)} J\Phi^{-1}(y)$ does not depend on time and is continuous with respect to $y$ and since $u(t,x)$ converges pointwise to zero outside $\Omega_a=\{\bar x_1, \bar x_2\}$ (see Lemma \ref{lem:decay2}), we obtain
\begin{equation}\label{balls3}
\lim\limits_{t\rightarrow +\infty} \int \limits_{U_1} u_1(t,x) \D x = \frac{u^0_1(\bar x_1)}{u^0_1(\bar x_2)} J\Phi^{-1}(\bar x_2)  \lim\limits_{t\rightarrow +\infty} \int \limits_{\Phi(U_1)} u_1(t,y) \D y,
 \end{equation}
Hence, the solution converges to a measure $c_{1,1}\delta_{\bar x_1} + c_{1,2}\delta_{\bar x_2}$ with strictly positive $c_{1,1}$ and $c_{1,2}$ such that 
\begin{equation}\label{MassProportion}
\frac{c_{1,1}}{c_{1,2}}=\frac{u^0_1(\bar x_1)}{u^0_1(\bar x_2)} J\Phi^{-1}(\bar x_2).
\end{equation} 
Since the total mass of $u_1$ is equal to $c_{1,1}+c_{1,2}=\bar \rho_1$, where $\bar \rho_1$ is given in Corollary \ref{Masses},  the constants $c_{1,1}$ and $c_{1,2}$ are uniquely determined. Relationship \eqref{MassProportion} indicates that the mass distribution between the different concentration points depends on the shape of the function $a(x)$ and on the initial data. 
 
(ii) Now, we consider the case where the mapping $\Phi$ defined above is only a homeomorphism and  $J\Phi^{-1}$ is continuous but  $J\Phi^{-1}(\bar x_2)=0$. Hence, equation \eqref{balls3} yields that $\lim\limits_{t\rightarrow +\infty} \int \limits_{U_1} u_1(t,x) \D x =0$, which implies that   the solution converges to a mass $c_{1,2}\delta_{\bar x_2}$  with $c_{1,2}=\bar \rho_1$.
\end{proof}

\begin{remark}
Continuity of $\frac{u^0_1(\Phi^{-1}(y))}{u^0_1(y)} J\Phi^{-1}(y)$ requires continuity of the initial data and strict positivity of $u^0_1$ on $\Omega_a$, which is reflected in the stronger assumptions of the theorem compared to Assumptions \ref{Ass}.
 \end{remark}
}

{

\section{Extension to initial data in the space of Radon measures}\label{Measures}

\noindent The phenomenon of mass concentration provides a motivation to consider the model in the space of positive Radon measures, as defined by the following equations
\begin{eqnarray}\label{sys:measures}
\frac{d}{d t} \mu_1(t)(B) &=& \int\limits_{B}\left (\frac{2a(x)}{1+K\rho_2(t)}-1\right)p  \mu_1(t)(\D x),\nonumber \\
\frac{d}{d t} \mu_2(t)(B) &=& \int\limits_{B}2\left(1-\frac{a(x)}{1+K\rho_2(t)}\right)p \mu_1(t) (\D x) - d\int\limits_{B} \mu_2(t)(\D x),
\end{eqnarray}
with
\begin{equation}\label{sys:rho}
\rho_i(t) = \int\limits_{\Omega}  \mu_i(t)(\D x), \quad i=1,2,
\end{equation}
 with the initial data
\begin{eqnarray}\label{sys:ini}
\mu_1(0) &=& \mu_1^0, \nonumber\\
\mu_2(0) &=& \mu_2^0,
\end{eqnarray}
where  $\mu_i^0$ are nonnegative Radon measures for $i=1,2$.  $x\in \Omega\subset \R^n$, for some $n\geq 1$, denotes the state of a cell and, for every Borel subset $B \subset \Omega$, 
$\mu_i(t)(B)=\int_{B} d\mu_i(t)$, $i=1,2$, are measures of cells in any of the states $x \in  B$ at
time $t$.  Variable $\rho_i$ denotes the mass of all cells from the $i-th$ compartment. 
Measures $\mu(t)$ are $C^1$ functions of time with values in the space of positive Radon measures with the total variation norm. Therefore, the time derivatives in equations \eqref{sys:measures} are understood as derivatives of the functions with values in a Banach space. \\

\noindent {Selection-mutation models in the spaces of positive Radon measures have been studied by many authors \cite{AMHF,AFT,BB,B,CCC,CA,DJMR}. In this context, convergence of the solutions with respect to the Prokhorov metric has been considered in Ref. \cite{AMHF}. For the relation between the Prokhorov metric and the Wasserstein distance used in our paper we refer to Ref.  \cite{GibbsSu}.}\\

\noindent Steps of the proof of Theorem \ref{thm:concentration} can be repeated for the measure-valued solutions with some modifications of the lemmas which rely on point-wise estimates of the quotients of solutions. Assuming that the initial data are measures such that $\mu_1^0$ is absolutely continuous with respect to $\mu_2^0$, Lemma \ref{Bound} can be reformulated for the model \eqref{sys:measures}-\eqref{sys:ini} by considering a Radon-Nikodym derivative
\begin{equation}\label{RadonNikodym}
\left(D_{\mu_2(t)}\mu_1(t)\right)(x) = \lim_{r \rightarrow 0^+}\frac{\mu_1(t)(B_{x,r})}{\mu_2(t)(B_{x,r})}
\end{equation}
instead of the point-wise quotients.\\

\noindent Next technical difficulty appears in Lemma \ref{asympt}. To show the asymptotic behaviour of the measure-valued solutions, we can apply the framework developed in Ref. \cite{BB}.  In the remainder of this section, we briefly discuss this extension.\\

\noindent The first equation of the model  \eqref{sys:measures}-\eqref{sys:ini} can be re-defined in the terms of a probabilistic measure modelling the frequency of a certain phenotype $x\in B$ in the population of mitotic cells $\mu_1$. It is given by the quotient 
$$\pi(t)(B)={\frac{\mu_1(t)(B)}{\mu_1(t)(\Omega)}},$$ 
where $B \subset \Omega$ is a Borel set, as defined before.\\

\noindent Using the equation for $\mu_1$, we obtain
\begin{equation}\label{eq_prob}
\frac{d}{d t}\pi(t)(B)=\frac{2p}{1+\rho_2(t)}\int\limits_{B}\left(a(x)-\int_{\Omega} a(\xi)\; \pi(t)(\D \xi) \right) \; \pi(t)(\D x).
\end{equation}
The model can be then formulated in the framework presented in the book by B\"urger \cite{B}.  Denoting the mean fitness by
\begin{equation}\label{meanfitness}
  \overline{\mathcal A}(t) = \frac{2p}{1+\rho_2(t)} \int_{\Omega} a(\xi)\; \pi(t)(\D \xi)
 \end{equation}
and the multiplication operator $\mathcal A(t)$ by
\begin{equation}\label{OperatorA}
\left(\mathcal A(t)\pi(t)\right)(B) =\frac{2p}{1+\rho_2(t)}\int\limits_{B}a(x) \pi(t)(\D x),
\end{equation}
we rewrite equation \eqref{eq_prob} as an ordinary differential equation in the space of Radon measures
\begin{equation}\label{eq_prob_abstract}
\frac{d}{dt}\pi(t) = \mathcal A(t) \pi(t) -  \overline{\mathcal A}(t) \pi(t).
\end{equation}
However, the obtained equation is more general than  the abstract equation in \cite{B}, due to the dependence of $\mathcal A$ on time.
Nevertheless, it holds
$$ \overline{\mathcal A}(t)=\left(\mathcal A(t) \pi(t)\right)(\Omega).$$
Using the form of the operator \eqref{OperatorA}, we rewrite it as a function of time $\alpha(t)=\frac{2p}{1+\rho_2(t)}$ multiplied by a time independent operator $\left(A \pi(t)\right)(B)=\int\limits_{B}a(x)\pi(t)(\D x)$,

\begin{equation}\label{alpha}
\mathcal A(t) =\alpha(t) A.
\end{equation}
This structure allows to follow the lines of \cite{BB} and focus on a differential equation given by
\begin{equation}\label{q-eq-t}
\frac{d}{dt}Q(t)=\mathcal A(t)Q(t).
\end{equation}
The structure assures that the family of operators ${\mathcal A}$ commutes. The operator ${\mathcal A}$ is bounded and it  generates a positive semigroup on the space of positive Radon measures ${\mathcal M^+}({\Omega})$.

Since $\alpha$ is a strictly positive and bounded function, due to the properties of $\rho_2$ shown in Lemma \ref{lem:1}, we can rescale time,  $s=\int_0^t \alpha(\xi)\D \xi$, and obtain a linear autonomous differential equation
\begin{equation}\label{q-eq-s}
\frac{d}{ds}Q(s)= AQ(s).
\end{equation}
Equivalence to a linear differential equation yields convergence of solutions to a solution $\pi(t)$ with the support concentrated on the set of maximal value of $a(x)$, $\bar a= \sup \limits_{x\in \Omega \cap {\rm supp}(\mu_1^0)}a(x)$. The latter result is the extension of our Lemma \ref{asympt} to the measure-valued solutions. \\
}}}

\noindent { In summary, by adapting the framework  developed in Ref. \cite{BB}, our results can be extended to the measure-valued solutions in the case of the model of the clonal evolution without mutations.   Asymptotic analysis carried out in \cite{BB} is based on the application of the infinite-dimensional version of the Perron-Frobenius Theorem, which is possible in the models with dynamics governed by an irreducible operator. The latter is the case in the models involving mutations described by an integral operator satisfying irreducibility conditions. That approach cannot be, however, directly applied to the extension of our model to the case with mutations. The difficulty is related to the estimates for the time dependent operator $\mathcal A$ defined in expression \eqref{OperatorA}, which rely on the equations for the ratios of  solutions in Lemma \ref{Bound}, or Radon-Nikodym derivatives  \eqref{RadonNikodym}, which cannot be established in the model with an additional nonlocal mutation operator. Therefore, including mutations in our model requires a different proof of the uniform boundedness and strict positivity of $\rho_2$ and extension of the analysis to the model with mutations remains an open question.
 }

{{{{\section{Discussion.}\label{discussion}

{ In this paper, a discrete multi-compartmental model of multiple cell lineages has been extended to a model coupling a two-stage differentiation structure with a continuous structure of phenotypes. The latter allows to investigate the role of the intra-cancer heterogeneity, including competition between healthy and cancer cells and dynamics of the multi-clonal structure of the system.\\

\noindent Based on recent analyses of the clones consisting of mutational variants in cancer \cite{Miller}, it follows that the dynamics of clone distributions may in many cases consist solely of change in relative frequencies of different clones. More specifically, the clones that have been dominant in the primary tumour, are out-competed by other clones in the relapsing or metastatic tumours, which  had low frequencies in the primary. The model in this paper provides a "mechanistic" explanation for these observations, which is also mathematically rigorous.\\

\noindent Asymptotic analysis of the proposed system of integro-differential equations suggests that the selection process may be governed by the cell's property of self-renewal that determines the fitness of each clone and ultimately leads to survival or extinction.}\\

\noindent Theorem \ref{thm:concentration} shows that, in a well-mixed cell production system, a negative nonlinear feedback such as that the one proposed in Ref. \cite{LanderB,Lander,MCSHJW}, leads to the selection of the subpopulation with the superior self-renewal potential. The assumption that the cell population is well-mixed leads to the nonlocal effect and is modelled using the integral term. This assumption reflects well the structure of the hematopoietic system. Consequently, our results suggest that the greater clonal heterogeneity observed in solid cancers than in blood cancers may be due to spatial effects of the cell-to-cell interactions. Additionally, Theorem \ref{concentration2} suggests some explanation of the co-existence of different clones having the same fitness. \\

\noindent The results stress the importance of self-renewal in cancer dynamics and allow concluding that slowly proliferating cancer cells with a high self-renewal potential are able to outcompete the cells that divide faster. It suggests an explanation of the clinical dynamics such as resistance to treatment. Importance of this observation in the context of the leukemia evolution, the response to chemotherapy and the dynamics of the disease relapses has been discussed  in Ref. \cite{StiehlBaranHoMarciniak}. The results obtained provide an explanation of the observed clonal selection in the acute myeloid leukemia in the course of the disease development and the relapse after chemotherapy reported by Ref. \cite{Ding}.
{ Recently,  fitting the AML model to patients' data has suggested that an increased self-renewal is correlated with a poor patient prognosis  \cite{StiehlBaranHoMarciniak2}.}
}}}

\section{Appendix}

\subsection{Flat metric}
We present here basic results concerning the space of positive Radon measures equipped with the flat metric $\rho_F $,  known also as the bounded Lipschitz distance \cite{Neunzert}.

\begin{definition}\label{defflatmetric} 
Let $\mu, \nu \in{\mathcal M^+}({\Omega})$.  The distance
function $\rho_F : {\mathcal M^+}({\Omega}) \times  {\mathcal M^+}({\Omega}) \rightarrow
[0, \infty)$ is defined by
\begin{equation}\label{Flatmetric}
  \rho_F(\mu, \nu) :=
\dst \sup \Big\{\int_{\Omega} \psi  d (\mu - \nu)
  \big| \: \psi \in C^1({\Omega}), \|\psi\|_{W^{1,\infty}} \leq 1 \Big\},
  \end{equation}
where $$\|\psi\|_{W^{1,\infty}} := \max\{\|\psi\|_{\infty},\|\partial_x \psi\|_{\infty}\}.$$
\end{definition}

\noindent The $\rho_F$ distance metrizes both weak* and narrow topologies on each tight subset of Radon measures with uniformly bounded total variation \cite{Schwartz,AFP}.

\begin{remark}
Every bounded Radon measure on a bounded set  $\Omega$ has an integrable first moment and hence the distance  $\rho_F$ is finite.
\end{remark}
\begin{proposition}\label{corollarflat}
 Flat metric satisfies the following properties:\\
\begin{itemize}
\item scale-invariance
\[
 { \rho_F(\theta\cdot\mu,\theta \cdot\nu)= \theta \rho_F(\mu,\nu).}\\
\]
\item  translation-invariance
\[
  \rho_F(T_{x}\mu,T_{x}\nu)=   \rho_F(\mu,\nu).
\]
\end{itemize}
\end{proposition}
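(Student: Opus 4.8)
The plan is to argue directly from the dual (variational) definition of the flat metric in Definition \ref{defflatmetric}, treating each property as a manipulation of the supremum over the admissible test class $\A_1 := \{\psi \in C^1(\R^+) : \|\psi\|_{W^{1,\infty}} \leq 1\}$. For scale-invariance I would exploit linearity of the pairing in the measure argument: since $\theta>0$ is a scalar, one has $\int_{\R^+} \psi \, \D(\theta\mu - \theta\nu) = \theta \int_{\R^+} \psi \, \D(\mu - \nu)$ for every $\psi \in \A_1$, and because $\theta$ is a positive constant it may be pulled out of the supremum. This gives $\rho_F(\theta\cdot\mu, \theta\cdot\nu) = \theta \, \rho_F(\mu, \nu)$ at once, with no optimisation over test functions required.

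For translation-invariance the essential step is a change of variables. Writing $T_x$ as the pushforward under the shift $y \mapsto y + x$, I would compute $\int_{\R^+} \psi \, \D(T_x \mu) = \int_{\R^+} \psi(\cdot + x)\, \D\mu$, and similarly for $\nu$, so that $\int_{\R^+}\psi \, \D(T_x\mu - T_x\nu) = \int_{\R^+} \tilde\psi \, \D(\mu - \nu)$ with $\tilde\psi := \psi(\cdot + x)$. The crux is then that $\psi \mapsto \tilde\psi$ is a bijection of $\A_1$ onto itself: translation leaves $\|\cdot\|_{\infty}$ and $\|\partial_x \cdot\|_{\infty}$ unchanged, hence preserves the $W^{1,\infty}$-norm, and it preserves $C^1$-regularity. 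Since the supremum defining $\rho_F$ ranges over a test class that is stable under this shift, its value is unchanged, and $\rho_F(T_x\mu, T_x\nu) = \rho_F(\mu, \nu)$ follows.

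The one genuine subtlety, and the point I would treat with care, is the boundary effect of working on $\R^+$ rather than on all of $\R$: a shift can transport mass across the endpoint $0$, so that the admissible class on $\R^+$ is not literally stable under arbitrary translations. I would resolve this either by interpreting measures and test functions on the full line (extending each $\psi$ to $\R$ without increasing its $W^{1,\infty}$-norm, which preserves admissibility), or by restricting attention to shifts compatible with the supports, as happens for the translations arising from the transport structure of the model. With this understood, the bijection argument above applies verbatim and both identities hold.
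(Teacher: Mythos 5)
Your argument is correct. Note that the paper itself states this proposition without proof in the Appendix, treating it as a standard fact about the bounded Lipschitz distance (with references to the literature), so there is no in-paper argument to compare against; your direct verification from the dual formula in Definition \ref{defflatmetric} is the natural and standard one. The scale-invariance step is immediate from linearity of the pairing and $\theta\geq 0$, and for translation-invariance you correctly identify the only genuine subtlety: on $\R^+$ the class $\{\psi\in C^1(\R^+):\|\psi\|_{W^{1,\infty}}\leq 1\}$ is not literally stable under the shift $\psi\mapsto\psi(\cdot+x)$, since one direction of the bijection requires extending a test function past the endpoint $0$ without increasing its $W^{1,\infty}$-norm (and, strictly, a density argument to recover $C^1$ regularity of the extension, which the paper itself invokes elsewhere as ``a standard approximation argument for the test functions''). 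With that point handled as you describe, both identities follow.
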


\noindent Completeness of the space $\left(\mathcal{M}^+(\Omega), \rho_F\right)$ is the result of $\left(\mathcal{M}^+(\Omega),\rho_F\right)$ being a subspace of  $\left(W^{1,\infty}(\Omega)\right)^*$ 
and the equivalence of the flat metric convergence and weak* convergence in $\mathcal{M}^+(\Omega)$, which is complete with respect to weak* convergence. Inclusion $\left(\mathcal{M}^+(\Omega),\rho_F\right) \subset \left(W^{1,\infty}(\Omega)\right)^*$ is proven using a standard approximation argument for the test functions and Proposition \ref{corollarflat}.
\begin{eqnarray*}
		\rho_F(\mu,\nu) &=& \sup\left\{\int\limits_{\Omega} \psi\,\D 				(\mu-\nu)\left|\psi\in C^1(\Omega), \left\|\psi\right\|_{W^{1,			\infty}(\Omega)} \leq 1 \right. \right\} \\
		&=&{ \sup\left\{\frac{1}{\theta}\int\limits_{\Omega} \varphi\,			\D(\mu-\nu)\left|\varphi\in W^{1,\infty}(\Omega), \left\|					\varphi\right\|_{W^{1,\infty}(\Omega)} \leq \theta 						\right. \right\}} \\
		&=& \left\|\mu-\nu\right\|_{\left(W^{1,\infty}(\Omega)\right)^*}
\end{eqnarray*}\\
Thus the flat metric is the metric induced by the dual norm of $W^{1,\infty}(\Omega)$; see e.g. \cite{GLMC,GMC, MullerOrtiz, Zhidkov}.

\subsection{Wasserstein metric}

The Wasserstein metric  $W_1: {\mathcal P}(\Omega)\times {\mathcal P}(\Omega) \longrightarrow [0,\infty)$ in its dual representation is defined by
$$ \begin{array}{@{}rcl@{\hspace*{2mm}}l@{}l@{}l@{}l@{}}
  W_1\left( \frac{\mu}{\mu(\Omega)},\frac{\nu}{\nu(\Omega)} \right)& :=
& \dst \sup & \Big\{ & \dst \int_{\Omega} \; \psi  \:\: d \left( \frac{\mu}{\mu(\Omega)} - \frac{\nu}{\nu(\Omega)}\right)
\; & \Big|\; \psi \in C^1(\Omega), \;\: {\rm Lip} \: \psi \leq 1
&\Big\}.
\end{array}$$
For more information on the Wasserstein metric we refer to \cite{Villani1,Villani2}.

\begin{acknowledgements}
This paper resulted from  the Collaborative Research Center, SFB 873 `Maintenance and Differentiation of Stem Cells in Development and Disease''. Collaboration of  AM-C and PG  was supported by the grant of National Science Centre No. 6085/B/H03/2011/40. The authors thank Frederik Ziebell for help with numerical simulations illustrating the results of this work. The authors are greatly indebted to the Associate Editor and the Referees for the helpful comments. 
\end{acknowledgements}


\begin{thebibliography}{99}

 \bibitem{AMHF}
{ Ackleh A S, et al. (1999) Survival of the fittest in a generalized logistic model. Mathematical Models and Methods in Applied Sciences 9.09, pp 1379--1391.}

\bibitem{AFT}
{  Ackleh A S, Fitzpatrick B, Thieme H (2005) Rate distributions and survival of the fittest: a formulation on the space of measures. Discrete and Continuous Dynamical Systems Series B 5.4, pp 917--928.}


\bibitem{AFP}
Ambrosio L, Fusco N, Pallara D (2000) Functions of Bounded Variation and Free Discontinuity Problems. Oxford Math. Monogr.

\bibitem{ThiemeODE}
{ Baer SM, Kooi B W, Kuznetsov Y A, Thieme H R (2006) Multiparametric bifurcation analysis of a basic two-stage population model. SIAM J. Appl. Math 66, pp.1339--1365.}
 
\bibitem{barles}
Barles G, Perthame B (2008) Dirac Concentrations in Lotka-Volterra Parabolic PDEs. Indiana U. Math. J.  57(7), pp 3275--3302.

\bibitem{BMP}
Barles G, Mirrahimi S, Perthame B (2009) Concentration in Lotka-Volterra parabolic or integral equations: a general convergence result. Meth. Appl. Anal. (MAA) 16, pp 321--340.

\bibitem{Bonnet}
Bonnet D,  Dick J E (1997) Human acute myeloid leukemia is organised as a hierarchy that originates from a primitive hematopoietic cell. Nat Med   3, pp 730-7.

\bibitem{B}
{ B\"urger R (2000) The mathematical theory of selection, recombination, and mutation. Vol. 228. Chichester: Wiley}

\bibitem{BB}
{  B\"urger R, Bomze I M (1996) Stationary distributions under mutation-selection balance: structure and properties. Advances in applied probability, pp 227--251.}

\bibitem{CaCu}
{  Calsina A, Cuadrado S (2004) Small mutation rate and evolutionarily stable strategies in infinite dimensional adaptive dynamics. Journal of mathematical biology 48.2, pp 135--159.}

\bibitem{CC}
{  Calsina A, Cuadrado S (2005) Stationary solutions of a selection mutation model: The pure mutation case. Mathematical Models and Methods in Applied Sciences 15.07, pp 1091--1117.}

\bibitem{CCC}
{  Cañizo J A, Carrillo J A, Cuadrado S (2013) Measure solutions for some models in population dynamics. Acta applicandae mathematicae 123.1, pp 141--156.}

\bibitem{CGU}
Carrillo J A, Gwiazda P, Ulikowska A (2012)
Splitting-Particle Methods for Structured Population Models: Convergence and Applications.
Math. Models Meth. Appl. Sci. doi: 10.1142/S0218202514500183.


\bibitem{CCGU}
Carrillo J A, Colombo R M, Gwiazda P, Ulikowska A (2012)
Structured populations, cell growth and measure valued balance laws.
J. Diff. Eq. 252: pp 3245--3277.

\bibitem{Clarke}
Clarke H (1983) Optimization and Nonsmooth Analysis. New York et al., John Wiley $\&$ Sons, New York.

\bibitem{CA}
{ Cleveland J, Ackleh A S (2013) Evolutionary game theory on measure spaces: Well-posedness. Nonlinear Analysis: Real World Applications 14.1, pp 785--797.}

\bibitem{Choi}
Choi S, Henderson M J, Kwan E, Beesley E H, Sutton R, Bahar A Y, Giles J, Venn N C, Pozza L D, Baker D L, et al. (2007) Relapse in children with acute lymphoblastic leukemia involving selection of a preexisting drug-resistant subclone. Blood 110, pp 632-9.

\bibitem{DJMR}
{ Desvillettes L, Jabin P-E, Mischler S, Raoul G (2008) On mutatio-selection dynamics. Commun. Math. Sci 6, no. 3, pp 729--747.}

\bibitem{Ding} Ding L, Ley T J, Larson D E, Miller C A, Koboldt D C, Welch J S, DiPersio J F (2012) Clonal evolution in relapsed acute myeloid leukaemia revealed by whole-genome sequencing. Nature 481, pp 506--510.

\bibitem{Doumic} Doumic M, Marciniak-Czochra A, Perthame B, Zubelli J. (2011) Structured population model of stem cell differentiation. SIAM J.~Appl.~Math. 71, pp 1918--1940.

\bibitem{Getto}
Getto P, Marciniak-Czochra A, Nakata Y, dM Vivanco M. (2013) Global dynamics of two-compartment models for cell production systems with regulatory mechanisms. Math. Biosci. 245, pp 258–-268.

\bibitem{GibbsSu}
{  Gibbs AL, Su FE (2017) On Choosing and Bounding Probability Metrics. International Statistical Review. 70, pp 419–435.}


\bibitem{GLMC}
Gwiazda P, Lorenz T, Marciniak-Czochra A (2010) A nonlinear structured population model: Lipschitz continuity of measure valued solutions with respect to model ingredients. J. Diff. Eq.
248, pp 2703--2735. 
 
 \bibitem{GMC}
 Gwiazda P.,  Marciniak-Czochra A (2010) Structured population equations in metric spaces. J. Hyperbolic Diff. Equ., 7, pp 733--773.
 
\bibitem{GJMU}    
Gwiazda P, Jab{\l}o{\'n}ski J, Marciniak-Czochra A, Ulikowska A (2013) Analysis of particle methods for structured population models with nonlocal boundary term in the framework of bounded Lipschitz distance. Numer. Methods Partial Differential Eq.  doi: 10.1002/num.21879.
 

\bibitem{Hope}
Hope K J, Jin L , Dick J E (2004) Acute Myeloid leukemia originates from a hierarchy of leukemic stem cell classes that differ in self-renewal capacity. Nat Immunology 5, pp 738--43.

\bibitem{JaRa} 
Jabin P E, Raoul G (2011) On selection dynamics for competitive interactions. Journal of mathematical biology 63.3, pp 493--517.

\bibitem{JJAMC}
Jab{\l}o{\'n}ski J, Marciniak-Czochra A (2013) Efficient algorithms computing distances between Radon measures on R. Preprint available at http://arxiv.org/abs/1304.3501

\bibitem{Jan}
Jan M, Majeti R (2013) Clonal evolution of acute leukemia genomes. Oncogene 32, pp 135-40.

\bibitem{Jan12}
Jan M, Snyder TM, Corces-Zimmerman MR, Vyas P, Weissman IL, Quake SR,
Majeti R 2012 Clonal evolution of preleukemic hematopoietic stem cells precedes
human acute myeloid leukemia. Sci Transl Med. 4, 149ra118.

\bibitem{LanderB} 
Lander A (2009) The 'stem cell' concept: is it holding us back?. J. Biol., 8(8), pp 70.

\bibitem{Lander} 
Lander A, Gokoffski K, Wan F, Nie Q, Calof A (2009) Cell lineages and the logic of proliferative control.
PLoS Biology 7:  e1000015.

\bibitem{Layton} 
Layton J E, Hockman H, Sheridan W P, Morstyn G (1989) Evidence for a novel in vivo control mechanism of granulopoiesis: mature cell-related control of a regulatory growth factor. Blood 74, pp 1303--1307.

\bibitem{Ley}
Ley T J, Mardis E R, Ding L, Fulton B, McLellan M D, Chen K, Dooling D, Dunford-Shore B H, McGrath S, Hickenbotham M, et al. (2008) DNA sequencing of a cytogenetically normal acute myeloid leukaemia genome. Nature 456, pp 66--72.


\bibitem{LMP} 
Lorz A, Mirrahimi S, Perthame B (2011) Dirac mass dynamics in a multidimensional nonlocal parabolic equation. Comm. Partial Differential Equations, 36(6), pp 1071--1098.

\bibitem{Lorz1} 
Lorz A, Lorenzi T, Clairambault J, Escargueil A, Perthame B (2013) Effects of space structure and combination therapies on phenotypic heterogeneity and drug resistance in solid tumours. arXiv preprint arXiv:1312.6237.

\bibitem{Lorz2}
Lorz A, Lorenzi T, Hochberg M E, Clairambault J, Perthame B (2013) Populational adaptive evolution, chemotherapeutic resistance and multiple anti-cancer therapies. ESAIM: Mathematical Modelling and Numerical Analysis 47.02pp 377-399. ESAIM: Mathematical Modelling and Numerical Analysis 47.02, pp 377--399.

\bibitem{Lutz}
Lutz C, Hoang V T, Buss E, Ho A D (2012) Identifying leukemia stem cells - Is it feasible and does it matter? Cancer Lett 338, pp 10--14.

\bibitem{LutzLeuk}
Lutz C, Woll P S, Hall G, Castor A, Dreau H, Cazzaniga G, Zuna J, Jensen C, Clark S A, Biondi A, et al. (2013) Quiescent leukaemic cells account for minimal residual disease in childhood lymphoblastic leukaemia. Leukemia 27, pp 1204-7.


\bibitem{MCSHJW}
Marciniak-Czochra A, Stiehl T, Ho A D, J\"ager W, Wagner W (2009)
Modeling asymmetric cell division in hematopoietic stem cells---regulation of self-renewal is essential for efficient repopulation.
Stem Cells Dev., 18, pp 377--386.

\bibitem{Miller}
{ Miller C A, White B S, Dees N, Griffith M, Welch J S,  Griffith O L,  Vij R, Tomasson M H, Graubert T A, Walter M J,
Ellis MJ,  Schierding W,  DiPersio JF,  Ley T,  Mardis E R, WilsonR K, Ding L (2014) SciClone: Inferring Clonal Architecture and Tracking the Spatial and Temporal Patterns of Tumor Evolution. PLOS Comp. Biol. 10, e1003665.}

\bibitem{MullerOrtiz}
M{\"u}ller S, Ortiz M (2004) On the {$\Gamma$}-convergence of discrete dynamics and variational integrators.
J.Nonlinear Sci. 14, pp 279--296.

\bibitem{Nakata}
Nakata Y, Getto P, Marciniak-Czochra A, Alarcon T (2011) Stability analysis of multi-compartment models for cell production systems. J.~Biol.~Dynamics. Published online: http://dx.doi.org/10.1080/17513758.2011.558214.
 
\bibitem{Neunzert}
Neunzert H (1981) An introduction to the nonlinear Boltzmann-Vlasov equation, in Kinetic Theories and the Boltzmann Equation. Springer, Berlin, Lecture  Notes in Math. 1048, pp 60--110.


\bibitem{perthame}
Perthame B (2007) Transport Equations in Biology. Birkh\"auser, Basel.

\bibitem{Schwartz}
Schwartz L (1973) Radon Measures. Oxford University Press.

\bibitem{Shinjo}
Shinjo K, Takeshita A, Ohnishi K, Ohno R (1997) Granulocyte colony-stimulating factor receptor at various differentiation stages of normal and leukemic hematopoietic cells. Leuk Lymphoma 25, pp 37--46.

\bibitem{Stiehl}
Stiehl T, Marciniak-Czochra A (2011) Characterization of stem cells using mathematical models of multistage cell lineages.
Math. Comput. Modelling 53, pp 1505--1517.

 \bibitem{StiehlMarciniakMMNP} 
Stiehl T, Marciniak-Czochra A (2012) Mathematical modelling of leukemogenesis and cancer stem cell dynamics. Math.~Mod.~Natural Phenomena, 7, pp 166--202.
 
 \bibitem{StiehlBaranHoMarciniak} 
Stiehl T, Baran N, Ho A D, Marciniak-Czochra A (2014) Clonal selection and therapy resistance in acute leukemias:
Mathematical modelling explains different proliferation patterns at diagnosis and relapse. J. Royal Society Interface, 11, 20140079. 

\bibitem{StiehlBaranHoMarciniak2} 
Stiehl T, Baran N, Ho A D, Marciniak-Czochra A (2015) Cell division patterns in acute myeloid leukemia stem-like cells determine clinical course: a model to predict patient survival. {Cancer Research}. 75, pp 940--949.

\bibitem{StiehlHoMarciniak}
Stiehl T, Ho A D, Marciniak-Czochra A (2013) The impact of CD34+ cell dose on engraftment after Stem Cell Transplantations: Personalized estimates based on mathematical modeling.  Bone Marrow Transp. Published online, doi: 10.1038/bmt.2013.138.

\bibitem{VanDelft}
Van Delft F W, Horsley S, Colman S, Anderson K, Bateman C, Kempski H, Zuna J, Eckert C, Saha V, Kearney L, et al. (2011) Clonal origins of relapse in ETV6-RUNX1 acute lymphoblastic leukemia. Blood 117, pp 6247-54.

\bibitem{Villani1}
{ Villani C (2006) Optimal transport: old and new, Springer-Verlag, Berlin.}

\bibitem{Villani2}
{ Villani C (2003) Topics in Optimal Transportation, Graduate Studies in Mathematics, vol. 58, American Mathematical Society, Providence, Rhode Island.}

\bibitem{Zhidkov}
Zhidkov P E (1998) On a problem with two-time data for the {V}lasov equation. Nonlinear Analysis 31, pp 537--547.

\end{thebibliography}
\end{document}